 \def\<{\left<}
 \def\>{\right>}
 \let\fle\to
 \def\e{\varepsilon}
 \def\R{\mathbb{R}}
 \def\L{\mathbb{L}}
 \def\S{\mathbb{S}}
 \def\H{\mathbb{H}}
 \def\tr{\text{\rm tr}}
 \def\det{\text{\rm det}}
 \def\cm{{\mathcal C}^\infty(M)}
 \let\ds\displaystyle
 \def\div{\text{\rm div}}
 \newcommand{\U}[1][k+1]{\mathcal{U}_{#1}}
 \newcommand{\V}[1][k]{\mathcal{V}_{#1}}
 \def\card{\text{\rm card}}
 \newcommand{\ol}{\overline}
 \newcommand{\va}{a}
 \newcommand{\vb}{b}
 \newcommand{\vA}{A}
 \newcommand{\mb}{\mathbb{M}_c^{n+1}}
\def\dashcruz#1#2#3#4{\setlength{\unitlength}{1mm}%
 \begin{picture}(0,0)
 \dashline[50]{2}(#1,0)(#2,0)
 \dashline[50]{2}(0,#3)(0,#4)
 \end{picture}}
\def\MatCero{\text{\Large\bfseries 0}}
\def\ColSep{\kern2pt}
\def\Sep#1{\kern#1pt}
\def\VSep{5pt}
\newtheorem{teor}{Theorem}
\newtheorem{lema}[teor]{Lemma}
\newtheorem{prop}[teor]{Proposition}
\newtheorem{exa}{Example}
\newtheorem{cla}{Claim}
\newenvironment{ejem}{\begin{exa}\upshape}{\end{exa}}
\newenvironment{proof}{\par\noindent Proof.\space}{\hfill$\Box$}
\newenvironment{claim}{\begin{cla}\itshape}{\end{cla}}
\title{Hypersurfaces in non-flat Lorentzian space forms satisfying $L_k\psi=A\psi+b$\kern2mm\footnote{This work has been partially supported by MICINN Project No.
MTM2009-10418, and Fundación Séneca, Spain Project No. 04540/GERM/06. This research is a result of the activity developed within the framework of the Programme
in Support of Excellence Groups of the Región de Murcia, Spain, by Fundación Séneca, Regional Agency for Science and Technology (Regional Plan for Science and
Technology 2007-2010).}}
\author{\normalsize Pascual Lucas\footnote{Corresponding author.\newline \hspace*{17pt}E-mail addresses: plucas@um.es and hectorfabian.ramirez@um.es}\and\normalsize
H.~Fabián Ramírez-Ospina}
\date{\normalsize Departamento de Matemáticas, Universidad de Murcia\\
       Campus de Espinardo, 30100 Murcia SPAIN\\[10mm] \today}
\begin{document}

\maketitle

\begin{abstract}
We study hypersurfaces either in the De Sitter space $\S_1^{n+1}\subset\R_1^{n+2}$ or in the anti De Sitter space $\H_1^{n+1}\subset\R_2^{n+2}$ whose position vector $\psi$ satisfies the condition $L_k\psi=A\psi+b$, where $L_k$ is the linearized operator of the $(k+1)$-th mean curvature of the hypersurface, for a fixed $k=0,\dots,n-1$, $A$ is an $(n+2)\times(n+2)$ constant matrix and $b$ is a constant vector in the corresponding pseudo-Euclidean space. For every $k$, we prove that when $A$ is self-adjoint and $b=0$, the only hypersurfaces satisfying that condition are hypersurfaces with zero $(k+1)$-th mean curvature and constant $k$-th mean curvature, open pieces of standard pseudo-Riemannian products in $\S_1^{n+1}$ ($\S_1^m(r)\times\S^{n-m}(\sqrt{1-r^2})$, $\H^m(-r)\times\S^{n-m}(\sqrt{1+r^2})$, $\S_1^m(\sqrt{1-r^2})\times\S^{n-m}(r)$, $\H^m(-\sqrt{r^2-1})\times\S^{n-m}(r)$), open pieces of standard pseudo-Riemannian products in $\H_1^{n+1}$ ($\H_1^m(-r)\times\S^{n-m}(\sqrt{r^2-1})$, $\H^m(-\sqrt{1+r^2})\times\S_1^{n-m}(r)$, $\S_1^m(\sqrt{r^2-1})\times\H^{n-m}(-r)$, $\H^m(-\sqrt{1-r^2})\times\H^{n-m}(-r)$) and open pieces of a quadratic hypersurface $\{x\in\mathbb{M}_{c}^{n+1}\;|\;\<Rx,x\>=d\}$, where $R$ is a self-adjoint constant matrix whose minimal polynomial is $t^2+at+b$, $a^2-4b\leq 0$, and $\mathbb{M}_{c}^{n+1}$ stands for $\S_1^{n+1}\subset\R_1^{n+2}$ or  $\H_1^{n+1}\subset\R_2^{n+2}$. When $H_k$ is constant and $b$ is a non-zero constant vector, we show that the hypersurface is totally umbilical, and then we also obtain a classification result (see Theorem 2).
\bigskip

\noindent\textbf{Mathematics Subject Classifications (2010):} 53C50, 53B25, 53B30\bigskip

\noindent\textbf{Keywords:} linearized operator $L_k$; isoparametric hypersurface; $k$-maximal hypersurface; Takahashi theorem; higher order mean curvatures; Newton transformations.
\end{abstract}

\clearpage

\section{Introduction}
\label{s:introduction}

It is well known that the Laplacian operator of a hypersurface $M^n$ immersed into $\R^{n+1}$ is an (intrinsic) second-order linear differential operator, which arises naturally as the
linearized operator of the first variation of the mean curvature for normal variations of the hypersurface. From this point of view, the Laplacian operator $\Delta$ can be seen as
the first one of a sequence of operators $\{L_0 = \Delta$, $L_1,\dots,L_{n-1}\}$, where $L_k$ stands for the linearized operator of the first variation of the $(k + 1)$th mean curvature, arising
from normal variations of the hypersurface (see, for instance, \cite{Reilly}). These operators are given by $L_k(f)=\tr(P_k\circ\nabla^2f)$, for a smooth function $f$ on $M$, where $P_k$
denotes the $k$th Newton transformation associated to the second fundamental form of the hypersurface, and $\nabla^2f$ denotes the self-adjoint linear operator metrically equivalent to the hessian of $f$. In particular, when $k=1$ the operator $L_1$ is nothing but the operator $\Box$ introduced by Cheng and Yau in \cite{CY} for the study of hypersurfaces with constant scalar curvature. Note that, in this context, the scalar curvature of $M$ is nothing but $\e n(n-1)H_2$, where $H_2$ stands for the second mean curvature and $\e=\pm1$ depends on the causal character of the normal vector (see next section for details).

From this point of view, and inspired by Garay's extension of Takahashi theorem and its subsequent generalizations and extensions (\cite{Tak66}, \cite{CP}, \cite{Gar90}, \cite{DPV90}, \cite{HV92}, \cite{AFL92-a}, \cite{AFL92-b}, \cite{AFL95}), Alías and Gürbüz initiated in \cite{AG06} the study of hypersurfaces in Euclidean space satisfying the general condition $L_k\psi=A\psi+b$, where $A\in\R^{(n+1)×(n+1)}$ is a constant matrix and $b\in\R^{n+1}$ is a constant vector. They show that the only hypersurfaces satisfying that condition are open pieces of hypersurfaces with zero $(k+1)$-th mean curvature, or open pieces of a round sphere $\S^n(r)$, or open pieces of a generalized spherical cylinder $\S^m(r)\times\R^{n-m}$, with $k+1\leq m\leq n-1$. Following the ideas contained in \cite{AG06}, we have completely extended to the Lorentz-Minkowski space the previous classification theorem obtained by Alías and Gürbüz. In particular, the following classification result was given in \cite[Theorem 1]{LR2010a}.

\noindent\textbf{Theorem A. (\cite{LR2010a})} {\itshape Let $\psi:M\rightarrow\mathbb{L}^{n+1}$ be an orientable hypersurface immersed into the Lorentz-Minkowski space $\mathbb{L}^{n+1}$, and let
$L_k$ be the linearized operator of the $(k+1)$th mean curvature of $M$, for some fixed $k=0,1,\dots,n-1$. Then the immersion satisfies the
condition $L_k\psi=A\psi+b$, for some constant matrix $A\in\mathbb{R}^{(n+1)\times(n+1)}$ and some constant vector $b\in\mathbb{L}^{n+1}$, if and only if it is one of the following hypersurfaces in $\mathbb{L}^{n+1}$:\vspace*{-.75\baselineskip}
\begin{enumerate}\itemsep-\parskip
\item a hypersurface with zero $(k+1)$th mean curvature;
\item an open piece of the totally umbilical hypersurface $\S^n_1(r)$ or $\H^n(-r)$;
\item an open piece of a generalized cylinder $\S^m_1(r)\times\R^{n-m}$, $\H^m(-r)\times\R^{n-m}$, with $k+1\leq m\leq n-1$, or $\L^m\times\S^{n-m}(r)$, with $k+1\leq n-m\leq n-1$.
\end{enumerate}}

In \cite{AK}, and as a natural continuation of the study started in \cite{AG06}, Alías and Kashani consider the study of hypersurfaces $M^n$ immersed either into the sphere $\S^{n+1}\subset\R^{n+2}$ or into the hyperbolic space $\H^{n+1}\subset\R^{n+2}_1$ whose position vector $x$ satisfies the condition $L_kx=Ax+b$, for some constant matrix $A\in\mathbb{R}^{(n+2)\times(n+2)}$ and some constant vector $b\in\mathbb{R}^{n+2}_q$, $q=0,1$. They show the following two results:

\noindent\textbf{Theorem B. (\cite{AK})} {\itshape The immersion $x$ satisfies the condition $L_kx=Ax$, for some self-adjoint constant matrix $A\in\mathbb{R}^{(n+2)\times(n+2)}$, if and only if it is one of the following hypersurfaces: (1) a hypersurface having zero (k+1)-th mean curvature and constant k-th mean curvature; (2) an open piece of a standard Riemannian product $\S^m(\sqrt{1-r^2})\times\S^{n-m}(r)\subset\S^{n+1}$, $0<r<1$; (3) an open piece of a standard Riemannian product $\H^m(-\sqrt{1+r^2})\times\S^{n-m}(r)\subset\H^{n+1}$, $r>0$.}

\noindent\textbf{Theorem C. (\cite{AK})} {\itshape The immersion $x$ satisfies the condition $L_kx=Ax+b$, for some self-adjoint constant matrix $A\in\mathbb{R}^{(n+2)\times(n+2)}$ and some non-zero constant vector $b\in\R^{n+2}$, if and only if it is one of the following hypersurfaces: (1) an open piece of a totally umbilical round sphere $\S^n(r)\subset\S^{n+1}$; (2) an open piece of a totally umbilical hyperbolic space $\H^n(-r)\subset\H^{n+1}$, $r>1$; (3) an open piece of a totally umbilical round sphere $\S^n(r)\subset\H^{n+1}$, $r>0$; (4) an open piece of a totally umbilical Euclidean space $\R^n\subset\H^{n+1}$.}

The hypersurfaces studied in Theorems B and C are Riemannian, and thus their shape operators are always diagonalizable. However, when the ambient space is a Lorentzian space form $\S^{n+1}_1$ or $\H^{n+1}_1$, the shape operator of the hypersurface needs not be diagonalizable, condition which plays a chief role in the Riemannian case. In this paper we extend, to the indefinite case, the results obtained in \cite{AK} for hypersurfaces immersed either into the sphere or into the hyperbolic space. For the sake of simplifying the notation and unifying the statements of our main results, let us denote by $\mathbb{M}^{n+1}_c$ either the De Sitter space $\S^{n+1}_1\subset\R^{n+2}_1$ if $c=1$, or the anti De Sitter space $\H^{n+1}_1\subset\R^{n+2}_2$ if $c=-1$. In this paper, we are able to give the following classification result.

\begin{teor}\label{T1}
Let $\psi:M\rightarrow\mathbb{M}_{c}^{n+1}\subset\mathbb{R}_q^{n+2}$ be an orientable hypersurface immersed into the space form $\mathbb{M}_c^{n+1}$, and let $L_k$ be the linearized operator of the $(k+1)$-th mean curvature of $M$, for some fixed $k=0,1,\ldots,n-1$. Then the immersion satisfies the condition $L_k\psi=A\psi$, for some self-adjoint constant matrix $A\in\mathbb{R}^{(n+2)\times(n+2)}$, if and only if it is one of the following hypersurfaces:\vspace*{-.8\baselineskip}
\begin{enumerate}\itemsep-2pt
\item[\upshape (1)] a hypersurface having zero $(k+1)$-th mean curvature and constant $k$-th mean curvature;
\item[\upshape (2)] an open piece of a standard pseudo-Riemannian product in $\S_1^{n+1}$: $\S_1^m(r)\times\S^{n-m}(\sqrt{1-r^2})$, $\H^m(-r)\times\S^{n-m}(\sqrt{1+r^2})$, $\H^m(-\sqrt{r^2-1})\times\S^{n-m}(r)$.
\item[\upshape (3)] an open piece of a standard pseudo-Riemannian product in $\H_1^{n+1}$: $\H_1^m(-r)\times\S^{n-m}(\sqrt{r^2-1})$, $\H^m(-\sqrt{1+r^2})\times\S_1^{n-m}(r)$, $\S_1^m(\sqrt{r^2-1})\times\H^{n-m}(-r)$, $\H^m(-\sqrt{1-r^2})\times\H^{n-m}(-r)$.
\item[\upshape (4)] an open piece of a quadratic hypersurface $\{x\in\mathbb{M}_{c}^{n+1}\subset\mathbb{R}_q^{n+2}\;|\;\<Rx,x\>=d\}$, where $R$ is a self-adjoint constant matrix whose minimal polynomial is $t^2+at+b$, $a^2-4b\leq 0$.
\end{enumerate}
\end{teor}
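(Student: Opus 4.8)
The plan is to analyse the two normal directions available to the immersion — the position vector $\psi$, which is normal to $\mb$ in $\R_q^{n+2}$, and the Gauss map $N$ of $M$ in $\mb$ — and to feed the self-adjointness of $A$ into the resulting identities. Writing $\e=\ig N,N\dg=\pm1$ for the causal character of $N$ and $c_k=(n-k)\binom{n}{k}$, the twofold Gauss formula (for $M\subset\mb$ and for $\mb\subset\R_q^{n+2}$), together with the trace identities $\tr P_k=c_kH_k$ and $\tr(P_kS)=c_kH_{k+1}$, yields
\[
L_k\psi=-c\,c_k H_k\,\psi+\e\,c_k H_{k+1}\,N.
\]
Setting $\alpha=-c\,c_kH_k$ and $\beta=\e\,c_kH_{k+1}$ (so $\beta$ vanishes exactly where $H_{k+1}$ does), the hypothesis $L_k\psi=A\psi$ becomes $A\psi=\alpha\psi+\beta N$. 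Differentiating this along a tangent field $X$, using $\ol\nabla_X\psi=X$ and $\ol\nabla_X N=-SX$ (here $\ol\nabla$ is the flat ambient connection and $S$ the shape operator of $M$ in $\mb$), gives
\[
AX=\alpha X-\beta SX+X(\alpha)\,\psi+X(\beta)\,N.
\]

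First I would pair this identity with $\psi$ and compare with $\ig X,A\psi\dg=0$: since $\ig\psi,\psi\dg=c$ and $\ig N,\psi\dg=0$, self-adjointness forces $c\,X(\alpha)=0$, so $\alpha$ — hence $H_k$ — is constant on all of $M$, which is the ``constant $k$-th mean curvature'' appearing in every alternative. With $\alpha$ constant the previous display loses its $\psi$-term, and differentiating it once more and separating tangential, $\psi$- and $N$-parts (the $\psi$-component reduces to an identity) produces a Hessian relation $\nabla^2\beta=\e\beta\,S^2+\e(\gamma-\alpha)S-c\beta\,I$, with $\gamma=\ig AN,N\dg$, and the structure equation
\[
\beta\,(\nabla_Y S)X=-Y(\beta)\,SX-X(\beta)\,SY-\ig SX,Y\dg\,\nabla\beta.
\]
These two relations govern $S$ on the open set $\mathcal U=\{p\in M:H_{k+1}(p)\neq0\}$, where $\beta\neq0$.

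The heart of the argument is to show that $M$ is isoparametric on $\mathcal U$, with $H_{k+1}$ (equivalently $\beta$) locally constant. The payoff is immediate once $\nabla\beta=0$: the identity above reduces to $AX=\alpha X-\beta SX\in T_pM$, so the \emph{constant} matrix $A$ leaves every tangent space invariant and acts there as $\alpha I-\beta S$; the eigenvalues of a fixed matrix restricted to an invariant subspace vary in a finite set, hence are locally constant, whence the principal curvatures of $M$ are constant. I expect proving $\nabla\beta=0$ to be the main obstacle, and it is precisely where the Lorentzian setting departs from Theorem~B: the shape operator $S$ need not be diagonalizable, so the eigenvalue-by-eigenvalue argument is unavailable. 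Instead I would work invariantly with the minimal polynomial of $S$ and, in the Lorentzian case $\e=1$, with the four algebraic (Segre) types of a self-adjoint operator — not merely the diagonalizable one — feeding each type into the structure equation and the Codazzi equation and contracting to conclude that every principal-curvature function, including a possible complex-conjugate pair, is constant and that $\nabla\beta$ vanishes.

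Finally, with $M$ isoparametric on $\mathcal U$ and $A|_{T_pM}=\alpha I-\beta S$ constant, I would classify by the algebraic type of $S$, relating its curvature spaces to the fixed eigenspaces of $A$ in $\R_q^{n+2}$. When $S$ is diagonalizable the admissible configurations are the standard pseudo-Riemannian products of items (2) and (3), the multiplicities and radii being pinned down by $\alpha=-c\,c_kH_k$, $\beta=\e\,c_kH_{k+1}$ and the Gauss equation of the product. When $S$ is of non-diagonalizable type (complex conjugate or repeated-root, i.e.\ irreducible or repeated quadratic minimal polynomial) I would take $R=\frac{1}{\beta}(\alpha I-A)+sI$ for a suitable constant $s$; then $RX=(S+sI)X$ is tangent while $R\psi=-N+s\psi$ is normal to $M$, so $X\ig R\psi,\psi\dg=2\ig R\psi,X\dg=0$ and $\ig R\psi,\psi\dg\equiv d$ is constant along $M$, placing $M$ in the quadric $\{x\in\mb:\ig Rx,x\dg=d\}$ of item (4), with $R$ of degree-two minimal polynomial $t^2+at+b$ and $a^2-4b\le0$. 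On $\mathcal U$ the models of (2)–(4) are real analytic with $H_{k+1}$ a nonzero constant, so $\mathcal U$ is closed as well as open; by connectedness either $\mathcal U=M$ or $\mathcal U=\emptyset$, the latter giving item (1).
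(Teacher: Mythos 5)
Your skeleton is sound and your computations check out: pairing $AX=\alpha X-\beta SX+X(\alpha)\psi+X(\beta)N$ with $\psi$ does give $H_k$ constant (this is exactly the paper's equation (24) with $b=0$), your Hessian relation and structure equation are correct, and your endgame is a legitimate variant of the paper's. Indeed, once $\nabla\beta=0$ your Hessian relation collapses to $\e\beta S^2+\e(\gamma-\alpha)S-c\beta I=0$, which is the paper's identity $S^2+\lambda S-\e cI=0$ (the paper obtains it by differentiating $AN=\alpha N+\e cc_kH_{k+1}\psi$ in two ways), and your construction $R=\frac1\beta(\alpha I-A)+sI$ with $\langle R\psi,\psi\rangle$ constant along $M$ is a direct route to the quadric of item (4). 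Two fixable omissions there: in the diagonalizable case, passing from ``isoparametric with at most two constant principal curvatures'' to the standard products of items (2)--(3) requires the classification results of Nomizu, Xiao, and Zhen-Qi--Xian-Hua, not just ``the Gauss equation of the product''; and in case (4) you must verify nondegeneracy of the level set, i.e.\ $\mu_R(cd)\neq0$ (the paper checks the equivalent condition $\mu_A(-cc_k)\neq0$).

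The genuine gap is the step you yourself flag as ``the main obstacle'': proving $\nabla\beta=0$, i.e.\ that $H_{k+1}$ is constant once $H_k$ is. This is the entire technical content of the theorem --- the paper's Lemma 7, resting on the case analysis of \cite[Lemma 9]{LR2010a} --- and your sketch (``work with the four Segre types, feed each into the structure equation and the Codazzi equation and contract'') does not supply it. Note first that Codazzi gives you nothing new: your structure equation $\beta(\nabla_YS)X=-Y(\beta)SX-X(\beta)SY-\langle SX,Y\rangle\nabla\beta$ already has a right-hand side symmetric in $X,Y$, so it is automatically Codazzi-compatible, and simple contractions of it (e.g.\ tracing yields $\nabla(\beta\,\tr S)=-2S\nabla\beta$) do not force $\nabla\beta=0$. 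Moreover your identities involve only $S$, which is essentially the $k=0$ situation; the working argument uses the Newton transformation, namely the relation $(S\circ P_k)(\nabla H_{k+1})=-\tfrac{\e(k+2)C_k}{2}H_{k+1}\nabla H_{k+1}$ on $\{\nabla H_{k+1}\neq0\}$, and then, for each of the four canonical forms of $S$, an induction over subsets $J$ on the truncated symmetric functions $\mu_k^J$ (Claims 1--10 of the paper), a trace argument via $\tr P_k=c_kH_k$ to exclude the case where all components of the gradient are nonzero, and --- crucially, an ingredient absent from your plan --- the observation that the frame directions annihilated by the gradient are eigenvectors of the \emph{constant} matrix $A$ with constant eigenvalues $\eta_i=-c_kH_{k+1}\kappa_i-cc_kH_k$, which converts the symmetric-function identities into a polynomial identity of the form $(-1)^{k+1}D_kH_k=B_0+B_1H_k+\cdots+B_kH_k^k$ forcing local constancy. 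Until a proof of this lemma is actually carried out (or correctly cited), the argument is a plan rather than a proof, since everything downstream --- the quadratic minimal polynomial, the isoparametric conclusion, and the classification into items (1)--(4) --- depends on it.
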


Finally, in the case where $A$ is self-adjoint and $b$ is a non-zero constant vector, we are able to prove the following classification result.

\begin{teor}\label{T2}
Let $\psi:M\rightarrow\mathbb{M}_{c}^{n+1}\subset\mathbb{R}_q^{n+2}$ be an orientable hypersurface immersed into the space form $\mathbb{M}_c^{n+1}$, and let $L_k$ be the linearized operator of the $(k+1)$-th mean curvature of $M$, for some fixed $k=0,1,\ldots,n-1$. Assume that $H_k$ is constant. Then the immersion satisfies the condition $L_k\psi=\vA\psi+\vb$, for some self-adjoint constant matrix $\vA\in\mathbb{R}^{(n+2)\times(n+2)}$ and some non-zero constant vector $\vb\in\mathbb{R}^{n+2}_q$, if and only if:\vspace*{-.8\baselineskip}
\begin{enumerate}\itemsep-5pt
\item[\upshape (i)] $c=1$ and it is an open piece of a totally umbilical hypersurface in $\S^{n+1}_1\subset\mathbb{R}^{n+2}_1$: $\S^n(r)$, $r>1$; $\H^n(-r)$, $r>0$; $\S^n_1(r)$, $0<r<1$; $\R^n$.
\item[\upshape (ii)] $c=-1$ and it is an open piece of a totally umbilical hypersurface in $\H^{n+1}_1\subset\mathbb{R}^{n+2}_2$: $\H^n_1(-r)$, $r>1$; $\H^n(-r)$, $0<r<1$; $\S^n_1(r)$, $r>0$; $\R^n_1$.
\end{enumerate}
\end{teor}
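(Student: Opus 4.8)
The plan is to first reduce the condition $L_k\psi=\vA\psi+\vb$ to a pointwise identity relating the position vector $\psi$, the unit normal $N$ of $M$ in $\mb$, and the constant data $\vA,\vb$. Writing $\varepsilon=\langle N,N\rangle=\pm1$ and $c_k=(n-k)\binom{n}{k}$, the expression for $L_k\psi$ established in the preliminaries reads $L_k\psi=-c\,c_kH_k\,\psi+\varepsilon\,c_kH_{k+1}\,N$. Since $H_k$ is assumed constant, substituting into $L_k\psi=\vA\psi+\vb$ gives
\[
\vA\psi=-c\,c_kH_k\,\psi+\varepsilon\,c_kH_{k+1}\,N-\vb,
\]
and differentiating along an arbitrary tangent field $X$ (using $\nabla^0_X\psi=X$ and the Weingarten formula $\nabla^0_XN=-SX$, where $S$ is the shape operator and $\nabla^0$ the flat connection of $\mathbb{R}^{n+2}_q$) yields
\[
\vA X=-c\,c_kH_k\,X-\varepsilon\,c_kH_{k+1}\,SX+\varepsilon\,c_k(XH_{k+1})\,N .
\]

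The heart of the argument is to deduce total umbilicity from these two identities together with the self-adjointness of $\vA$ and the hypothesis $\vb\neq0$. First I would test the first identity against a tangent vector $X$: since $\langle\psi,X\rangle=\langle N,X\rangle=0$, one gets $\langle \vA\psi,X\rangle=-\langle \vb,X\rangle$; on the other hand the second identity shows $\langle \vA X,\psi\rangle=0$, because $X$, $SX$ and $N$ are all orthogonal to $\psi$ (and here the constancy of $H_k$ is essential, since it is what removes a would-be $\psi$-term). Self-adjointness of $\vA$ forces $\langle \vb,X\rangle=0$ for every tangent $X$, so $\vb$ is everywhere normal to $M$ in $\mathbb{R}^{n+2}_q$; thus $\vb=\alpha\,\psi+\beta\,N$ for some functions $\alpha,\beta$. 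Differentiating this relation (recall $\vb$ is constant) and splitting into the $\psi$-, $N$- and tangential components gives $X\alpha=0$, $X\beta=0$ and $\alpha\,X=\beta\,SX$ for all $X$. Hence $\alpha,\beta$ are constants and $\beta\,S=\alpha\,\mathrm{Id}$. If $\beta=0$ then $\alpha=0$ and $\vb=0$, contradicting $\vb\neq0$; therefore $\beta\neq0$ and $S=(\alpha/\beta)\,\mathrm{Id}$, so $M$ is totally umbilical with a constant umbilicity factor $\lambda$. It is worth stressing that this step completely bypasses the non-diagonalizability phenomenon that complicates Theorem~\ref{T1}: here umbilicity makes $S$ automatically diagonalizable.

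Once $M$ is totally umbilical with $S=\lambda\,\mathrm{Id}$, $\lambda$ constant, it is an open piece of a standard totally umbilical hypersurface of $\mb$, cut out by a fixed affine hyperplane $\langle x,v\rangle=\tau$. I would then run through the standard classification of totally umbilical hypersurfaces in the De Sitter and anti De Sitter spaces, organized by the causal character of $v$ (equivalently of $N$, through $\varepsilon$) and by $\tau$, to produce exactly the lists in items (i) (for $c=1$) and (ii) (for $c=-1$). For the converse I would verify directly that each such hypersurface satisfies the required condition: on it the normal is an affine function of the position, $N=-\tfrac{\tau}{c\kappa}\psi+\tfrac1\kappa v$ with $\kappa=\sqrt{|\langle v,v\rangle-\tau^2/c|}$ constant, so substituting into the formula for $L_k\psi$ gives $L_k\psi=\rho\,\psi+\sigma\,v$ with constants $\rho,\sigma$. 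Choosing $\vA=\rho\,\mathrm{Id}$ (self-adjoint) and $\vb=\sigma\,v$ then solves the equation, and $\vb\neq0$ precisely when $\lambda\neq0$, i.e. when $M$ is not totally geodesic, which is exactly what singles out the hypersurfaces appearing in (i)--(ii).

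The genuinely delicate part is not the umbilicity deduction, which is short and clean, but the final bookkeeping: correctly matching each admissible pair $(\varepsilon,\lambda)$, and each sign of $c$, with the right model space and the right range of the radius $r$, and checking that the totally geodesic limit $\lambda=0$ (which forces $\vb=0$) is the only omitted case. I expect the main obstacle to lie there, in carefully separating the De Sitter and anti De Sitter cases and confirming that the enumeration of causal characters reproduces the two complete lists in the statement with no spurious or missing cases.
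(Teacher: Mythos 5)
Your reduction to total umbilicity is sound and follows essentially the same skeleton as the paper's proof, with one notable economy: you never invoke Lemma~\ref{L7}. The paper first applies that lemma (whose proof occupies most of Section~5) to conclude that $H_{k+1}$ is constant and to discard the case $H_{k+1}=0$ via Example~\ref{ej1}, and only then differentiates $b=\varepsilon\langle b,N\rangle N+c\langle b,\psi\rangle\psi$ to obtain $-\varepsilon\langle b,N\rangle SX+c\langle b,\psi\rangle X=0$. Your observation that the constancy of both coefficients falls out directly from differentiating $b=\alpha\psi+\beta N$ (the $\psi$- and $N$-components of $\nabla^0_Xb=0$ give $X\alpha=X\beta=0$, the tangential one gives $\alpha X=\beta SX$) makes the constancy of $H_{k+1}$ unnecessary for this theorem, so that part of your argument is correct and in fact slightly leaner than the paper's. (The stray $\varepsilon$ in your formula for $L_k\psi$ is a harmless convention slip: with the paper's $c_k=(-\varepsilon)^k(n-k)\binom{n}{k}$ one has $L_k\psi=c_kH_{k+1}N-cc_kH_k\psi$, and none of your steps depend on this sign.)

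The genuine gap is the case $\beta\neq0$, $\alpha=0$, i.e.\ $\lambda=0$. You assert that the totally geodesic limit ``forces $\vb=0$'', but this is contradicted by your own intermediate output: in that case $b=\beta N$ with $\beta$ a nonzero constant, so $b\neq0$ and instead $N$ is a constant vector. What must be shown is that this configuration is incompatible with $A$ being a constant self-adjoint matrix, and that follows from nothing you have written; appealing to the converse computation (that on a totally geodesic hypersurface the canonical pair has $b=0$) does not suffice either, since the pair $(A,b)$ in the hypothesis is not a priori the canonical one. The paper never meets this case because Lemma~\ref{L7} and Example~\ref{ej1} exclude $H_{k+1}=0$ before umbilicity is derived; having bypassed that lemma, you must close the case yourself. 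A short fix inside your own framework: with $S=0$ one has $H_{k+1}\equiv0$, so your second identity reduces to $AX=-cc_kH_kX$, whence $\langle AN,X\rangle=\langle N,AX\rangle=0$ for every tangent $X$ at every point; since $N$ is constant, $AN$ is a constant vector orthogonal to all tangent spaces, so writing $AN=f\psi+gN$ and differentiating (using $S=0$) gives $f=0$, hence $\langle AN,\psi\rangle=0$. On the other hand, self-adjointness and your first identity give $\langle AN,\psi\rangle=\langle N,A\psi\rangle=-\langle N,b\rangle=-\varepsilon\beta\neq0$, a contradiction. With this supplement, and the classification of the remaining umbilical cases with $\lambda\neq0$ by Example~\ref{ej2} (which you describe correctly), your proof is complete.
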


\section{Preliminaries}
\label{s:preliminaries}

In this section we recall some formulas and notions about hypersurfaces in Lorentzian space forms that will be used later on. Let $\mathbb{R}^{n+2}_{q}$ be the $(n+2)$-dimensional pseudo-Euclidean space of index $q\geq 1$, whose metric tensor $\<,\>$ is given by
\[
\<,\>=-\sum_{i=1}^q dx_i^2+\sum_{j=q+1}^{n+2} dx_j^2,
\]
where $x=(x_1,\ldots,x_{n+2})$ denotes the usual rectangular coordinates in $\mathbb{R}^{n+2}$. The pseudo-Euclidean De Sitter space of index $q$ and radius $r$ is defined by
\[
\mathbb{S}^{n+1}_{q}(r)=\{x\in \mathbb{R}^{n+2}_{q} \;|\; \<x,x\>=r^2\},
\]
and the pseudo-Euclidean anti-De Sitter space of index $q$ and radius $-r$ is defined by
\[
\mathbb{H}^{n+1}_{q}(-r)=\{x\in \mathbb{R}^{n+2}_{q+1} \;|\; \<x,x\>=-r^2\}.
\]
Throughout this paper, we will consider both the case of hypersurfaces immersed into Lorentzian De Sitter space $\mathbb{S}^{n+1}_{1}\equiv\mathbb{S}^{n+1}_{1}(1)$, and the case of hypersurfaces immersed into Lorentzian anti De Sitter space $\mathbb{H}^{n+1}_{1}\equiv\mathbb{H}^{n+1}_{1}(-1)$. In order to simplify our notation and computations, we will denote by $\mathbb{M}^{n+1}_c$ the De Sitter space $\mathbb{S}^{n+1}_{1}$ or the anti De Sitter space $\mathbb{H}^{n+1}_{1}$ according to $c=1$ or $c=-1$, respectively. We will use $\mathbb{R}^{n+2}_{q}$  to denote the corresponding pseudo-Euclidean space where $\mathbb{M}^{n+1}_{c}$ lives, so that $q=1$ if $c=1$ and $q=2$ if $c=-1$. Then its metric is given by
\[
\<,\>=-dx_1^2+cdx_2^2+dx_3^2+\cdots+dx_{n+2}^2,
\]
and we can write
\[
\mathbb{M}^{n+1}_c=\{x\in \mathbb{R}^{n+2}_q \;|\; -x_1^2+cx_2^2+x_3^2+\cdots+x^2_{n+1}=c\}.
\]
It is well known that $\mathbb{S}^{n+1}_1\subset\mathbb{R}^{n+2}_1$ and $\mathbb{H}^{n+1}_1\subset\mathbb{R}^{n+2}_2$ are Lorentzian totally umbilical hypersurfaces with constant sectional curvature $+1$ and $-1$, respectively.

Let $\psi:M\longrightarrow \mathbb{M}^{n+1}_c\subset\mathbb{R}^{n+2}_{q}$ be a connected orientable hypersurface with Gauss map $N$, $\<N,N\>=\e=\pm1$. Let $\nabla^0$, $\ol\nabla$ and $\nabla$ denote the Levi-Civita connections on $\mathbb{R}^{n+2}_{q}$, $\mathbb{M}^{n+1}_c$ and $M$, respectively. Then the Gauss and Weingarten formulas are given by
\begin{equation}\label{F.G}
\nabla^0_XY=\nabla_XY+\varepsilon\< SX,Y\> N-c\< X,Y\> \psi,
\end{equation}
and
\[
SX=-\ol{\nabla}_XN=-\nabla^0_XN,
\]
for all tangent vector fields $X,Y\in\mathfrak{X}(M)$, where $S:\mathfrak{X}(M)\longrightarrow \mathfrak{X}(M)$ stands for the shape operator (or Weingarten endomorphism) of $M$, with respect to the chosen orientation $N$.

Let ${\mathcal B}=\{E_1,E_2,\ldots,E_{n+1}\}$ be a (local) frame in $\mathbb{M}^{n+1}_c$. Without loss of generality, we will say that ${\mathcal B}$ is an
orthornormal frame when
\begin{equation*}
\begin{array}{l}
\< E_1,E_1\>=-1 \text{ and }
\< E_1,E_j\>=0,\quad j=2,\ldots,{n+1},\\
\< E_i,E_j\>=\delta_{ij},\quad 2\leq i,j\leq{n+1};
\end{array}
\end{equation*}
and we will say that ${\mathcal B}$ is a pseudo-orthornormal frame, when the following conditions are satisfied:
\begin{equation*}
\begin{array}{l}
\< E_1,E_2\>=-1 \text{ and }
\< E_1,E_1\>=\< E_2,E_2\>=0,\\
\< E_i,E_j\>=0,\quad i=1,2,\quad j=3,\ldots,{n+1},\\
\< E_i,E_j\>=\delta_{ij},\quad 3\leq i,j\leq{n+1}.
\end{array}
\end{equation*}

It is well-known (see, for instance, \cite[pp. 261--262]{ONeill}) that the shape operator $S$ of the hypersurface $M$ can be expressed, in an appropriate frame, in one of the following types:
\begin{align}\label{Formas}
 & \text{I. } S\approx
 \left[\begin{array}{@{\ColSep}c@{\ColSep}c@{\ColSep}c@{\ColSep}c@{\ColSep}}
    \kappa_1 &         &        & \MatCero  \\
             &\kappa_2 &        &           \\
             &         & \ddots &           \\
    \MatCero &         &        & \kappa_{n}
 \end{array}\right];
 \kern5mm
 \text{II. } S\approx
 \left[\begin{array}{@{\ColSep}c@{\Sep4}r@{\Sep4}c@{\Sep4}c@{\ColSep}c@{\ColSep}c@{\ColSep}}
   \kappa &  -b   &                             & & & \MatCero \\
   b      &\kappa &                             & & &          \\[-\VSep]
          &       & \dashcruz{-10}{16}{-15}{11} & & & \\[-\VSep]
          &       &                             & \kappa_3 & & \\
          &       &                             & & \ddots &   \\
   \MatCero&    &                             & & &\kappa_{n}
   \end{array}\right],\quad b\neq 0;
   \nonumber\\[5mm]
 & \text{III. } S\approx
 \left[\begin{array}{@{\ColSep}c@{\Sep6}c@{\Sep4}c@{\Sep4}c@{\ColSep}c@{\ColSep}c@{\ColSep}}
   \kappa &  0    &                             & & & \MatCero \\
   1      &\kappa &                             & & &          \\[-\VSep]
          &       & \dashcruz{-9}{16}{-15}{11} & & & \\[-\VSep]
          &       &                             & \kappa_3 & & \\
          &       &                             & & \ddots &   \\
   \MatCero&    &                             & & &\kappa_{n}
   \end{array}\right];
 \kern5mm
 \text{IV. } S\approx
 \left[\begin{array}{@{\ColSep}r@{\Sep6}c@{\Sep6}c@{\Sep4}c@{\Sep4}c@{\ColSep}c@{\ColSep}c@{\ColSep}}
  \kappa & 0      & 0      &                             &        &  & \MatCero \\
    0    & \kappa & 1      &                             &        & \\
    -1   & 0      & \kappa &                             &        & \\[-\VSep]
         &        &        & \dashcruz{-16}{16}{-16}{16} &        & & \\[-\VSep]
         &        &        &                             &\kappa_4 & & \\
         &        &        &                             & & \ddots & \\
    \MatCero &    &        &                             & & & \kappa_{n}
   \end{array}\right].
\end{align}
In cases I and II, $S$ is represented with respect to an orthonormal frame, whereas in cases III and IV, the frame is pseudo-orthonormal.

The characteristic polynomial $Q_S(t)$ of the shape operator $S$ is given by
\begin{align*}
	Q_S(t)&={\rm det}(tI-S)=\sum_{k=0}^na_kt^{n-k},\quad\text{ with } a_0=1.
\end{align*}
Making use of the Leverrier--Faddeev method (see \cite{Lev,Fad}), the coefficients of $Q_S(t)$ can be computed, in terms of the traces of $S^j$, as follows:
\begin{equation}\label{C.Pol}
    a_k=-\frac{1}{k}\sum_{j=1}^ka_{k-j}{\rm{tr}}(S^j),\quad k=1,\ldots,n,\quad\text{ with } a_0=1.	
\end{equation}
Bearing in mind the type of shape operator $S$, we can see that the coefficients of $Q_S(t)$ for $S$ of types I, III and IV, are given by
\begin{equation}\label{CPC1}
\left\{\begin{array}{l}
\displaystyle a_1=-\sum_{i=1}^n\kappa_i,\\
\displaystyle a_k=(-1)^k \kern-6pt \sum^n_{i_1<\cdots<i_k}\kern-8pt \kappa_{i_1}\cdots\kappa_{i_k},\quad  k=2,\ldots,n,
\end{array}\right.
\end{equation}
whereas if $S$ is of type II then they are given by
\begin{equation}\label{CPC2}
\left\{\begin{array}{ll}
\ds a_1=-\sum_{i=1}^n\kappa_i,\\
\ds a_k=(-1)^k\Bigg[\sum^n_{i_1<\cdots<i_k}\kern-8pt\kappa_{i_1}\cdots\kappa_{i_k}\  +\ b^2\kern-12pt \sum^n_{\textrm{\tiny{$\begin{array}{cc}i_1\!\!<\!\!\cdots\!<\!i_{k-2}\\ i_j\neq 1,2\end{array}$}}}
\kern-12pt\kappa_{i_1}\cdots\kappa_{i_{k-2}}\Bigg],\quad k=2,\ldots,n.
\end{array}\right.
\end{equation}
If $S$ is of type II or III, then we consider that $\kappa_1=\kappa_2=\kappa$, and if $S$ is of type IV we consider that $\kappa_1=\kappa_2=\kappa_3=\kappa$. From now on, we will write
\begin{equation*}
\mu_{_{k}}=\kern-5pt\sum^n_{i_1<\cdots<i_k}\kern-7pt\kappa_{i_1}\cdots\kappa_{i_k} \qquad\textrm{and}\qquad\mu^{J}_{_{k}}=\kern-9pt \sum^n_{\tiny{\begin{array}{cc}i_1\!\!<\!\!\cdots\!<\!i_{k}\\ i_j\notin J \end{array}}}
\kern-13pt\kappa_{i_1}\cdots\kappa_{i_k},
\end{equation*}
where $k\in\{1,\ldots,n\}$ and $J\subset\{1,\ldots,n\}$. Observe that
\begin{equation}\label{F}
\mu_{_{k}}^\emptyset=\mu_{_{k}}\quad\text{and}\quad\mu_{_{k}}=\kappa_m\mu_{_{k-1}}^m+\mu_{_{k}}^m,
\end{equation}
where $\mu_k^m$ stands for $\mu_k^{\{m\}}$.

Then the coefficients $a_k$ of characteristic polynomial $Q_S(t)$, given in equations (\ref{CPC1}) and (\ref{CPC2}), can be easily written as follows
\begin{align}\label{F2}
 a_k&=(-1)^k\mu_{_{k}}, &\textrm{in cases I, III, IV;}\\
 a_k&=(-1)^k(\mu_{_{k}}+b^2\mu_{_{k-2}}^{1,2}), &\textrm{in case II.}\label{F3}
\end{align}
We use here that $\mu_{_{0}}=1$ and $\mu_{_{k}}=0$ if $k<0$.

The \emph{$k$-th mean curvature} or \emph{mean curvature of order $k$} of $M$ is defined by
\begin{equation}\label{C.H_k}
\binom{n}{k}H_k=(-\varepsilon)^ka_k,
\end{equation}
where $\ds\binom{n}{k}=\frac{n!}{k!(n-k)!}$. In particular, when $k=1$,
\[
nH_1=-\varepsilon a_1=\varepsilon{\rm tr}(S),
\]
and so $H_1$ is nothing but the usual mean curvature $H$ of $M$, which is one of the most important extrinsic curvatures of the hypersurface. The hypersurface $M$ is said to be $k$-maximal in $\mathbb{M}^{n+1}_c$ if $H_{k+1}\equiv 0$. On the other hand, $H_2$ defines a geometric quantity which is related to
the (intrinsic) scalar curvature of $M$. Indeed, it follows from the Gauss equation of $M$
that its Ricci curvature is given by
\begin{equation}\label{C.Ricci}
\mathrm{Ric}(X,Y)=(n-1)c\<X,Y\>+nH_1\<SX,Y\>-\varepsilon\<SX,SY\>,\quad X,Y\in\mathfrak{X}(M),
\end{equation}
and then, from (\ref{C.Pol}), the scalar curvature Scal=tr(Ric) of $M$ is
\begin{equation}\label{C.Escalar}
{\rm Scal}=n(n-1)c+\varepsilon\Big(-a_1 \mathrm{tr}(S)-\mathrm{tr}(S^2)\Big)=n(n-1)(c+\varepsilon H_2).
\end{equation}

\section{The Newton transformations}
\label{s:Newton}

The $k$-th Newton transformation of $M$ is the operator $P_k:\mathfrak{X}(M)\longrightarrow \mathfrak{X}(M)$ defined by
\begin{align*}
P_k=\sum_{j=0}^ka_{k-j}S^j.
\end{align*}
Equivalently, $P_k$ can be defined inductively by
\begin{align}\label{indutiva}
P_0=I\quad\text{and}\quad P_k=a_kI+S\circ P_{k-1}.
\end{align}
Note that by Cayley-Hamilton theorem we have $P_n=0$. The Newton transformations were introduced by Reilly \cite{Reilly} in the Riemannian context; its definition was $\ol{P}_k=(-1)^k P_k$. We have the following properties of $P_k$ (the proof is algebraic and straightforward).

\begin{lema}\label{L1}\label{Pro}
Let $\psi:M^n\rightarrow\mathbb{M}^{n+1}_c$ be a hypersurface in the Lorentzian space form $\mathbb{M}^{n+1}_c$. The Newton transformations $P_k$ satisfy:\vspace*{-.75\baselineskip}
\begin{list}{}{\leftmargin5pt\itemsep0pt}
\item[(a)] $P_k$ is self-adjoint and commutes with $S$.
\item[(b)] $\tr(P_k)=(n-k)a_k=c_kH_k$.
\item[(c)] $\tr(S\circ P_k)=-(k+1)a_{k+1}=\e c_kH_{k+1}$, $1\leq k\leq n-1$.
\item[(d)] $\tr(S^2\circ P_{k})=a_1a_{k+1}-(k+2)a_{k+2}=C_k\big(nH_1H_{k+1}-(n-k-1)H_{k+2}\big)$, $1\leq k\leq n-2$.
\end{list}\vspace*{-.75\baselineskip}
Here, the constants $c_k$ and $C_k$ are given by
\[
c_k=(-\varepsilon)^k(n -k)\binom{n}{k}=(-\varepsilon)^k(k+1)\binom{n}{k+1}\qquad\textrm{and}\qquad C_k=\dfrac{c_k}{k+1}.
\]
\end{lema}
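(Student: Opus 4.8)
The plan is to treat part (a) as a purely algebraic consequence of the definition of $P_k$ as a polynomial in the shape operator, and to obtain (b)--(d) from the inductive relation (\ref{indutiva}) together with the Leverrier--Faddeev recursion (\ref{C.Pol}); the passage from the coefficients $a_k$ to the mean curvatures $H_k$ is then just a matter of substituting (\ref{C.H_k}) and simplifying two binomial identities. Everything is phrased through traces of powers of $S$, which is what makes the whole lemma short.

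For (a), since $P_k=\sum_{j=0}^k a_{k-j}S^j$ is a polynomial in $S$ with scalar coefficients, it automatically commutes with $S$; and because the second fundamental form is symmetric, $S$ is self-adjoint with respect to the (possibly indefinite) induced metric, so every power $S^j$ is self-adjoint and hence so is the combination $P_k$. I would stress here that self-adjointness holds irrespective of whether $S$ is diagonalizable, so the type II--IV operators of (\ref{Formas}) cause no trouble. For (b), taking traces in $P_k=\sum_{j=0}^k a_{k-j}S^j$ gives $\tr(P_k)=a_k\tr(I)+\sum_{j=1}^k a_{k-j}\tr(S^j)$; the second sum is exactly $-k\,a_k$ by (\ref{C.Pol}), whence $\tr(P_k)=n\,a_k-k\,a_k=(n-k)a_k$. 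Inserting $a_k=(-\e)^k\binom{n}{k}H_k$ from (\ref{C.H_k}) yields $(n-k)a_k=c_kH_k$, and the identity $(n-k)\binom{n}{k}=(k+1)\binom{n}{k+1}$ produces the second form of $c_k$.

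For (c) and (d) the key device is to read (\ref{indutiva}) as $S\circ P_k=P_{k+1}-a_{k+1}I$. Taking traces and using (b) for $P_{k+1}$ gives $\tr(S\circ P_k)=(n-k-1)a_{k+1}-n\,a_{k+1}=-(k+1)a_{k+1}$, which becomes $\e c_kH_{k+1}$ after (\ref{C.H_k}). For (d) I would compose once more with $S$: from $S\circ P_k=P_{k+1}-a_{k+1}I$ we get $S^2\circ P_k=S\circ P_{k+1}-a_{k+1}S$, so $\tr(S^2\circ P_k)=\tr(S\circ P_{k+1})-a_{k+1}\tr(S)$. Applying (c) at level $k+1$ and using $\tr(S)=-a_1$ gives $a_1a_{k+1}-(k+2)a_{k+2}$. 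Substituting (\ref{C.H_k}) for $a_1,a_{k+1},a_{k+2}$, noting $(-\e)^{k+2}=(-\e)^k$, and using $(n-k-1)\binom{n}{k+1}=(k+2)\binom{n}{k+2}$ together with $C_k=c_k/(k+1)=(-\e)^k\binom{n}{k+1}$, collapses the expression to $C_k\big(nH_1H_{k+1}-(n-k-1)H_{k+2}\big)$.

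I do not expect a genuine obstacle: the argument is a two-step induction driven by (\ref{indutiva}) and (\ref{C.Pol}), and the only care required is the bookkeeping of the two binomial identities and of the signs $(-\e)^k$. The one conceptual point worth making explicit is that all four formulas are expressed via traces of powers of $S$, so they hold verbatim for the non-diagonalizable shape operators of types II, III and IV; this is precisely why the lemma remains \emph{algebraic and straightforward} in the Lorentzian setting, where diagonalizability is lost.
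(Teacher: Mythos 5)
Your proof is correct: part (a) follows as you say from $P_k$ being a polynomial in the self-adjoint operator $S$ (with no diagonalizability needed), part (b) from the trace of the defining sum combined with the Leverrier--Faddeev recursion (\ref{C.Pol}), and parts (c) and (d) from reading (\ref{indutiva}) as $S\circ P_k=P_{k+1}-a_{k+1}I$ and iterating, with the binomial identities and the substitution (\ref{C.H_k}) handled correctly and the ranges $k\leq n-1$, $k\leq n-2$ exactly matching what your use of (b) and (c) at level $k+1$ requires. The paper omits the proof entirely, remarking only that it is ``algebraic and straightforward,'' and your argument is precisely that intended trace computation, so there is nothing to compare beyond noting that your explicit emphasis on validity for the non-diagonalizable types II--IV of (\ref{Formas}) is a worthwhile clarification.
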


Next we are going to describe the covariant derivative of the shape operator $S$ and the $k$th Newton transformation $P_k$. To do that, we will work with a (local) tangent frame of vector fields $\{E_1,E_2\ldots,E_n\}$ in which $S$ adopts its canonical form, and we need to distinguish four cases, according to the canonical form of the shape operator, see equation (\ref{Formas}).

Let $(w_i^j)$ be the connection 1-forms, defined by $w_i^j(X)=\<\nabla_XE_i,E_j\>$, so that $w_i^j=-w_j^i$. The following four propositions are technical results that we will use later on. Their proofs are straightforward.

\begin{prop}[\emph{S} is of type I]\label{caso1}\ \\
Suppose that the shape operator $S$ is of type I, and let $\{E_1,E_2\ldots,E_n\}$ be an orthonormal frame such that $SE_i=\kappa_iE_i$,  $i=1,\ldots,n$. Then we have:
\begin{align*}
(\nabla_XS)E_i &=X(\kappa_i)E_i+\sum_{i\neq j}\varepsilon_{\!j}\,(\kappa_i-\kappa_j)\omega_i^j(X)E_j,\\
P_{k}E_i &=(-1)^k\mu^i_{_{k}}E_i,
\end{align*}
for every $i=1,\dots,n$, where $\e_i=\<E_i,E_i\>$.
\end{prop}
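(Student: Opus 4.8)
The plan is to treat the two assertions separately, since each follows from a short direct computation once the relevant definitions are unwound; no global argument is needed.

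For the first identity I would start from the definition of the covariant derivative of the shape operator viewed as a $(1,1)$-tensor, namely $(\nabla_X S)E_i = \nabla_X(S E_i) - S(\nabla_X E_i)$. Since $S E_i = \kappa_i E_i$, the Leibniz rule gives $\nabla_X(S E_i) = X(\kappa_i) E_i + \kappa_i \nabla_X E_i$, so that $(\nabla_X S)E_i = X(\kappa_i)E_i + \kappa_i \nabla_X E_i - S(\nabla_X E_i)$. The key step is to expand $\nabla_X E_i$ in the orthonormal frame; because the frame is orthonormal in the Lorentzian sense, the correct expansion carries the causal signs, $\nabla_X E_i = \sum_j \e_j \langle \nabla_X E_i, E_j\rangle E_j = \sum_j \e_j \omega_i^j(X) E_j$. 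Substituting and using $S E_j = \kappa_j E_j$ turns $\kappa_i \nabla_X E_i - S(\nabla_X E_i)$ into $\sum_j \e_j (\kappa_i - \kappa_j)\omega_i^j(X) E_j$; the $j=i$ term drops out (both because $\kappa_i - \kappa_i = 0$ and because $\omega_i^i = 0$), yielding exactly the stated formula.

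For the second identity I would argue by induction on $k$ using the inductive definition of $P_k$ in (\ref{indutiva}). The base case $P_0 E_i = E_i = (-1)^0 \mu_{_{0}}^i E_i$ is immediate since $\mu_{_{0}}^i = 1$. For the inductive step, $P_k E_i = a_k E_i + S(P_{k-1} E_i) = a_k E_i + (-1)^{k-1}\mu_{_{k-1}}^i \kappa_i E_i$ by the hypothesis together with $S E_i = \kappa_i E_i$. Inserting the case-I value $a_k = (-1)^k \mu_{_{k}}$ from (\ref{F2}) and factoring out $(-1)^k$ leaves the coefficient $\mu_{_{k}} - \kappa_i \mu_{_{k-1}}^i$, which is precisely $\mu_{_{k}}^i$ by the combinatorial splitting (\ref{F}) with $m=i$. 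This closes the induction and gives $P_k E_i = (-1)^k \mu_{_{k}}^i E_i$.

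I expect the only real subtlety to be bookkeeping rather than anything conceptual: one must retain the causal signs $\e_j$ in the frame expansion (they are exactly what distinguishes this Lorentzian statement from its Riemannian analogue), and one must correctly invoke the case-I value $a_k = (-1)^k \mu_{_{k}}$ together with the splitting identity (\ref{F}). Everything else is a routine application of the Leibniz rule and the definitions, so the computation is indeed straightforward.
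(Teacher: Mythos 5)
Your proof is correct and is exactly the ``straightforward'' computation the paper has in mind (the paper omits the proofs of Propositions \ref{caso1}--\ref{caso4} entirely, declaring them straightforward): the Leibniz-rule expansion of $(\nabla_XS)E_i$ with the causal signs $\e_j$ in the frame expansion $\nabla_XE_i=\sum_j\e_j\,\omega_i^j(X)E_j$, and the induction on $k$ via the recursion (\ref{indutiva}) combined with $a_k=(-1)^k\mu_{_{k}}$ from (\ref{F2}) and the splitting (\ref{F}). Both steps check out, including the vanishing of the $j=i$ term and the base case $\mu_{_{0}}^i=1$, so there is nothing to add.
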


\begin{prop}[\emph{S} is of type II]\label{caso2}\ \\
Suppose that the shape operator $S$ is of type II, and let $\{E_1,E_2\ldots,E_n\}$ be an orthonormal frame such that $SE_1=\kappa E_1+bE_2$, $SE_2=-bE_1+\kappa E_2$, and $SE_i=\kappa_iE_i$, $i\geq3$. Then the covariant derivative $\nabla S$ is given by
\begin{align*}
(\nabla_XS)E_1&=\left(X(\kappa)+2b\,\omega_1^2(X)\right)E_1 +X(b)E_2+ \sum^n_{j=3}\left((\kappa-\kappa_j)\,\omega_1^j(X)+b\,\omega_2^j(X)\right)E_j,\\
(\nabla_XS)E_2&=-X(b)E_1+\left(X(\kappa)+2b\,\omega_2^1(X)\right)E_2 +\sum^n_{j=3}\left((\kappa-\kappa_j)\,\omega_2^j(X)+b\,\omega_1^j(X)\right)E_j,\\
(\nabla_XS)E_i&=\left(\kappa\,\omega_i^1(X)+b\,\omega_i^2(X)- \kappa_i\omega_i^1(X)\right)E_1
+\left(b\,\omega_i^1(X)-\kappa\,\omega_i^2(X)+\kappa_i\omega_i^2(X)\right)E_2\\
&\quad+X(\kappa_i)E_i+\sum^n_{j\neq1,2,i}(\kappa_i-\kappa_j)\,\omega_i^j(X)E_j,\quad i\geq3.
\end{align*}
The Newton transformation $P_k$ satisfies
\begin{align*}
P_kE_1&=(-1)^k(\mu_{_k}^1E_1+b\mu_{_{k-1}}^{1,2}E_2),\\
P_kE_2&=(-1)^k(-b\mu_{_{k-1}}^{1,2}E_1+\mu_{_k}^1E_2),\\
P_kE_i&=(-1)^k(\mu^i_{_{k}}+b^2\mu^{1,2,i}_{_{k-2}})E_i,\quad i\geq3.
\end{align*}
\end{prop}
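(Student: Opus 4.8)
The plan is to establish the two assertions separately: the formula for $\nabla S$ follows by a direct computation from the definition of the covariant derivative of a tensor, while the formulas for $P_k$ follow by induction on $k$ from the recursion (\ref{indutiva}).

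First I would compute $\nabla S$ from $(\nabla_X S)E_i = \nabla_X(SE_i) - S(\nabla_X E_i)$. Writing $\nabla_X E_i = \sum_j \varepsilon_j\,\omega_i^j(X)\,E_j$ with $\varepsilon_j = \langle E_j, E_j\rangle$, and noting that self-adjointness of a type II shape operator forces $E_1$ and $E_2$ to have opposite causal character, I would fix $\varepsilon_1 = -1$, $\varepsilon_2 = 1$, $\varepsilon_j = 1$ for $j\geq 3$. The relations $\omega_i^i = 0$ (since $\langle E_i,E_i\rangle$ is constant) and $\omega_i^j = -\omega_j^i$ then reduce everything to collecting coefficients. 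Differentiating $SE_1 = \kappa E_1 + bE_2$ produces the $X(\kappa)$ and $X(b)$ terms together with the cross term $2b\,\omega_1^2(X)E_1$, the factor $2$ arising precisely from combining $\kappa\nabla_X E_1 - S(\nabla_X E_1)$ with $b\nabla_X E_2$ after using $\varepsilon_1 = -1$ and $\omega_2^1 = -\omega_1^2$; the computation for $E_2$ is symmetric. For $i \geq 3$ the argument is the same as in the type I case treated in Proposition \ref{caso1}, the only new feature being that $S(\nabla_X E_i)$ now feeds components back into $E_1$ and $E_2$, giving the stated mixed terms.

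For the Newton transformations I would induct on $k$ using $P_k = a_kI + S\circ P_{k-1}$ and the explicit coefficients $a_k = (-1)^k(\mu_k + b^2\mu_{k-2}^{1,2})$ from (\ref{F3}). The base case $P_0 = I$ is immediate, since $\mu_0^1 = \mu_0^i = 1$ and the $\mu$'s of negative order vanish. In the inductive step, applying $S$ to the assumed expression for $P_{k-1}$ and adding $a_k$ times the relevant basis vector, the diagonal part ($i \geq 3$) closes up via the recursion (\ref{F}) in the two forms $\mu_k = \kappa_i\mu_{k-1}^i + \mu_k^i$ and $\mu_{k-2}^{1,2} = \kappa_i\mu_{k-3}^{1,2,i} + \mu_{k-2}^{1,2,i}$, which convert the $\mu_k$ and $\mu_{k-2}^{1,2}$ coming from $a_k$ into $\mu_k^i$ and $\mu_{k-2}^{1,2,i}$. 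For the $\{E_1,E_2\}$-block I would use the same recursion with $m = 1$ and $m = 2$, namely $\mu_k = \kappa\mu_{k-1}^1 + \mu_k^1$ and $\mu_{k-1}^1 = \kappa\mu_{k-2}^{1,2} + \mu_{k-1}^{1,2}$, remembering $\kappa_1 = \kappa_2 = \kappa$.

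I expect the main obstacle to be the bookkeeping in the $\{E_1,E_2\}$-block: because $S$ mixes $E_1$ and $E_2$, each step of the induction throws off-diagonal contributions of the form $\pm b(\cdots)$, and the signs, governed by $\varepsilon_1 = -1$ and by the $b^2$-term in $a_k$, must be tracked with care so that the off-diagonal coefficients repackage correctly as $\pm b\,\mu_{k-1}^{1,2}$. By contrast, the diagonal entries for $i \geq 3$ and the whole $\nabla S$ computation are routine once the causal signs $\varepsilon_1=-1$, $\varepsilon_2=\varepsilon_j=1$ are fixed.
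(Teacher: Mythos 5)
Your strategy is correct and is exactly the computation the paper intends: the authors omit the proof of this proposition (declaring it straightforward), and the natural argument is your direct expansion of $(\nabla_XS)E_i=\nabla_X(SE_i)-S(\nabla_XE_i)$ with $\varepsilon_1=-1$ (forced, as you note, by self-adjointness of the type II block with $b\neq0$), together with induction on $k$ via (\ref{indutiva}) and (\ref{F3}), closed up by precisely the two instances of (\ref{F}) you list. Your account of the factor $2$ is essentially right, with one small correction: the $E_1$-component of $\kappa\nabla_XE_1$ vanishes, and the two equal contributions $b\,\omega_1^2(X)E_1$ come from $-S(\nabla_XE_1)$ and from $b\nabla_XE_2$.

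One substantive warning, since you leave the off-diagonal signs as ``$\pm$'': carried out faithfully under the stated convention $SE_1=\kappa E_1+bE_2$, $SE_2=-bE_1+\kappa E_2$, your induction yields
\begin{align*}
P_kE_1&=(-1)^k\bigl(\mu_{_k}^1E_1-b\mu_{_{k-1}}^{1,2}E_2\bigr), &
P_kE_2&=(-1)^k\bigl(b\mu_{_{k-1}}^{1,2}E_1+\mu_{_k}^1E_2\bigr),
\end{align*}
i.e.\ the \emph{opposite} off-diagonal signs to those printed in the proposition. A two-line check: for $n=2$, $k=1$ one has $a_1=-2\kappa$, so $P_1E_1=a_1E_1+SE_1=-\kappa E_1+bE_2$, whereas the printed formula gives $-\kappa E_1-bE_2$; moreover the printed block is inconsistent with $P_n=0$ and with Lemma \ref{L1}(c), since it would give $\tr(S\circ P_1)=-2(\kappa^2-b^2)\neq-2a_2=-2(\kappa^2+b^2)$. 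Similarly, in the printed formula for $(\nabla_XS)E_2$ the term $b\,\omega_1^j(X)$ should be $-b\,\omega_1^j(X)$; the remaining $\nabla S$ formulas do match the stated convention. These are sign typos in the paper, amounting to the relabelling $b\mapsto-b$ in the $\{E_1,E_2\}$-block, and they are harmless downstream (in Case 2 of Lemma \ref{L7} only $b^2$ and the conclusion $\mu_{_{k-1}}^{1,2}=0$ are used, and these are insensitive to the sign); but be aware that if you execute your plan as written, the induction will not close onto the statement verbatim, so your write-up should either record the corrected signs or note the discrepancy explicitly.
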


\begin{prop}[\emph{S} is of type III]\label{caso3}\ \\
Assume that the shape operator $S$ is of type III, and let $\{E_1,E_2,\ldots,E_n\}$  be a pseudo-orthonormal frame such that $SE_1=\kappa E_1+E_2$, $SE_2=\kappa E_2$, and $SE_i=\kappa_iE_i$, $i\geq3$. Then the covariant derivative $\nabla S$ satisfies
\begin{align*}
(\nabla_XS)E_1&=X(\kappa)E_1+2\,\omega_1^2(X)E_2+\sum^n_{j=3}\left((\kappa-\kappa_j)\,\omega_1^j(X)+\omega_2^j(X)\right)E_j,\\
(\nabla_XS)E_2&=X(\kappa)E_2+\sum^n_{j=3}(\kappa-\kappa_j)\,\omega_2^j(X)E_j,\\
(\nabla_XS)E_i&=\big(\kappa\omega_i^2(X)-\kappa_i\omega_i^2(X)\big)E_1+\big(\omega_i^2(X)+\kappa\omega_i^1(X)-\kappa_i\omega_i^1(X)\big)E_2\\
&\quad +X(\kappa_i)E_i+\sum^n_{j\neq1,2,i}(\kappa_i-\kappa_j)\,\omega_i^j(X)E_j,\quad i\geq3.
\end{align*}
The Newton transformation $P_k$ is given by
\begin{align*}
  P_kE_1&=(-1)^k\big(\mu_{_k}^1E_1-\mu_{_{k-1}}^{1,2}E_2\big),\\
  P_kE_2&=(-1)^k\mu_{_k}^1E_2,\\
  P_kE_i&=(-1)^k\mu^i_{_{k}}E_i,\quad i\geq3.
   \end{align*}
\end{prop}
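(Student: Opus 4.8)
The plan is to compute both assertions directly in the given pseudo-orthonormal frame, the only care being that this frame is not orthonormal. \emph{First} I would determine the covariant derivatives $\nabla_X E_i$ in the frame $\{E_1,\dots,E_n\}$. Writing $\nabla_X E_i=\sum_j c_i^j E_j$ and pairing against the metric relations $\<E_1,E_1\>=\<E_2,E_2\>=0$, $\<E_1,E_2\>=-1$, $\<E_a,E_j\>=0$ $(a=1,2,\ j\ge 3)$, $\<E_i,E_j\>=\delta_{ij}$ $(i,j\ge 3)$, one recovers the coefficients as $c_i^1=-\omega_i^2(X)$, $c_i^2=-\omega_i^1(X)$, and $c_i^j=\omega_i^j(X)$ for $j\ge 3$. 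Differentiating the null conditions $\<E_1,E_1\>=\<E_2,E_2\>=0$ gives $\omega_1^1=\omega_2^2=0$, while the antisymmetry $\omega_i^j=-\omega_j^i$ holds just as in the orthonormal case. Hence
\[
\nabla_X E_i=-\omega_i^2(X)\,E_1-\omega_i^1(X)\,E_2+\sum_{j\ge 3}\omega_i^j(X)\,E_j,
\]
which is the single computation that drives everything else.

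\emph{Second}, with this expansion in hand I would evaluate $(\nabla_X S)E_i=\nabla_X(SE_i)-S(\nabla_X E_i)$ for $i=1$, $i=2$, and $i\ge 3$ separately, inserting $SE_1=\kappa E_1+E_2$, $SE_2=\kappa E_2$, $SE_i=\kappa_iE_i$, applying $S$ to the frame expansion above, and collecting the coefficient of each $E_j$. The cancellations are automatic: on $E_1$ the terms $-\kappa\,\omega_1^2$ and $+\kappa\,\omega_1^2$ cancel, leaving $X(\kappa)$; the off-diagonal entry $+E_2$ of $SE_1$ is exactly what produces the coefficient $2\,\omega_1^2(X)$ on $E_2$ in $(\nabla_X S)E_1$; and the differences $(\kappa-\kappa_j)$ and $(\kappa_i-\kappa_j)$ emerge precisely as the stated coefficients. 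The case $i\ge 3$ is identical once one notes $\omega_i^i=0$.

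\emph{Third}, for the Newton transformations I would argue by induction on $k$ using $P_0=I$ and the recursion (\ref{indutiva}), $P_k=a_kI+S\circ P_{k-1}$, together with $a_k=(-1)^k\mu_k$ from (\ref{F2}) (valid in case III). Assuming the three formulas at level $k-1$, applying $S$ and regrouping, the only nontrivial point is to recognise the resulting symmetric-function combinations as restricted sums via (\ref{F}). Concretely: on $E_2$ one uses $\mu_k-\kappa\mu_{k-1}^1=\mu_k^1$; on $E_1$ one uses the same identity for the $E_1$ coefficient and, for the $E_2$ coefficient coming from the off-diagonal part of $SE_1$, the relation $\mu_{k-1}^1-\kappa\mu_{k-2}^{1,2}=\mu_{k-1}^{1,2}$, which is (\ref{F}) applied to the index set $\{1\}$ with $m=2$ (recall $\kappa_1=\kappa_2=\kappa$); and on $E_i$, $i\ge 3$, one uses $\mu_k-\kappa_i\mu_{k-1}^i=\mu_k^i$. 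The base case $k=0$ holds since $\mu_0^J=1$.

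The computations are routine linear algebra, so there is no deep obstacle; the \emph{only} real care needed is the bookkeeping forced by the pseudo-orthonormal frame. Getting the sign-and-index pattern $c_i^1=-\omega_i^2(X)$, $c_i^2=-\omega_i^1(X)$ right in the first step, and then matching each algebraic combination of the $\mu$'s to the correct restricted sum $\mu_k^1$, $\mu_{k-1}^{1,2}$, $\mu_k^i$ through repeated use of (\ref{F}), is where all the attention goes; everything else is forced.
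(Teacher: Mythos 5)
Your proposal is correct: the frame expansion $\nabla_XE_i=-\omega_i^2(X)E_1-\omega_i^1(X)E_2+\sum_{j\ge3}\omega_i^j(X)E_j$ (with $\omega_1^1=\omega_2^2=0$ and antisymmetry intact), the ensuing direct computation of $(\nabla_XS)E_i$, and the induction via $P_k=a_kI+S\circ P_{k-1}$ with $a_k=(-1)^k\mu_{_k}$ and the restricted-sum identities $\mu_{_k}-\kappa\mu_{_{k-1}}^1=\mu_{_k}^1$, $\mu_{_{k-1}}^1-\kappa\mu_{_{k-2}}^{1,2}=\mu_{_{k-1}}^{1,2}$, $\mu_{_k}-\kappa_i\mu_{_{k-1}}^i=\mu_{_k}^i$ all verify against the stated formulas. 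The paper omits the proof as ``straightforward,'' and your argument is exactly the intended direct verification, so there is nothing to compare beyond agreement.
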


\begin{prop}[\emph{S} is of type IV]\label{caso4}\ \\
Suppose that the shape operator $S$ is of type IV, and let $\{E_1,E_2,\ldots,E_n\}$ be a pseudo-orthonormal tangent frame such that $SE_1=\kappa E_1-E_3$, $SE_2=\kappa E_2$, $SE_3=E_2+\kappa E_3$, and $SE_i=\kappa_iE_i$, $i\geq4$. Then the covariant derivative $\nabla S$ satisfies
\begin{align*}
(\nabla_XS)E_1&=\left(X(\kappa)+\omega_3^2(X)\right)E_1+2\,\omega_3^1(X)E_2- \omega_1^2(X)E_3+\sum^n_{j=4}\left((\kappa-\kappa_j)\,\omega_1^j(X)- \omega_3^j(X)\right)E_j,\\
(\nabla_XS)E_2&=\left(X(\kappa)-\omega_2^3(X)\right)E_2+\sum^n_{j=4}(\kappa-\kappa_j)\,\omega_i^j(X)E_j,\\ (\nabla_XS)E_3&=-\omega_1^2(X)E_2+\left[X(\kappa)+2\,\omega_2^3(X)\right]E_3+ \sum^n_{j=4}\left((\kappa-\kappa_j)\,\omega_3^j(X)+\omega_2^j(X)\right)E_j,\\
(\nabla _XS)E_i&=\left(\kappa\omega_i^2(X)-\kappa_i\omega_i^2(X)\right)E_1+\left(\kappa\omega_i^1(X)- \omega_i^3(\!X\!)-\kappa_i\omega_i^1(\!X\!)\right)E_2\\
&+\left(\kappa_i\omega_i^3(X)-\kappa\omega_i^3(X)-\omega_i^2(X)\right)E_3 +X(\kappa_i)E_i+\sum^n_{j\neq1,2,3,i}(\kappa_i-\kappa_j)\,\omega_i^j(X)E_j,\quad i\geq4.
\end{align*}
The Newton transformation $P_k$ is given by
\begin{align*}
   P_kE_1&=(-1)^k(\mu^1_{_k}E_1-\mu^{1,2,3}_{_{k-2}}E_2 +\mu^{1,2}_{_{k-1}}E_3),\\
   P_kE_2&=(-1)^k\mu_{_k}^1E_2,\\
   P_kE_3&=(-1)^k(-\mu^{1,2}_{_{k-1}}E_2+\mu^1_{_k}E_3),\\
   P_kE_i&=(-1)^k\mu^i_{_{k}}E_i,\quad i\geq4.
   \end{align*}
\end{prop}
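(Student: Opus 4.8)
The plan is to prove the two assertions separately, starting with the covariant derivative. I would work from the defining identity $(\nabla_X S)E_i=\nabla_X(SE_i)-S(\nabla_X E_i)$ and reduce everything to the connection forms $\omega_i^j(X)=\langle\nabla_X E_i,E_j\rangle$. The one genuinely new ingredient, compared with the orthonormal cases I and II, is that the frame is pseudo-orthonormal: $E_1,E_2$ form a null pair with $\langle E_1,E_2\rangle=-1$ and $\langle E_1,E_1\rangle=\langle E_2,E_2\rangle=0$, while $E_3,\dots,E_n$ are orthonormal. Hence to recover a vector field from its inner products with the frame one must invert the (constant) Gram matrix $G$, whose null $2\times2$ block is its own inverse, so that $G^{-1}=G$. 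This yields the expansion $\nabla_X E_i=-\omega_i^2(X)E_1-\omega_i^1(X)E_2+\sum_{j\geq 3}\omega_i^j(X)E_j$, in which the roles of the indices $1$ and $2$ are interchanged. Because $G$ is constant, metric compatibility gives $\omega_i^j=-\omega_j^i$ and in particular $\omega_i^i=0$; these relations are exactly what make the mixed terms collapse.

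With this expansion in hand I would treat the four cases $i=1$, $i=2$, $i=3$, and $i\geq 4$ in turn, in each case substituting the type IV action $SE_1=\kappa E_1-E_3$, $SE_2=\kappa E_2$, $SE_3=E_2+\kappa E_3$, $SE_j=\kappa_j E_j$ ($j\geq 4$), and then collecting the coefficient of each frame vector. The computation is routine bookkeeping; the only delicate entries are those coming from the $3\times 3$ Jordan block, since $S$ mixes $E_1,E_2,E_3$ and the null index-swap must be applied consistently when expanding $\nabla_X E_1$ and $\nabla_X E_3$ (for example, the appearance of $X(\kappa)+2\,\omega_2^3(X)$ in the $E_3$-component of $(\nabla_X S)E_3$ comes precisely from combining $\omega_2^3$ with $-\omega_3^2=\omega_2^3$).

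For the Newton transformations I would argue by induction on $k$ using the recursion $P_k=a_kI+S\circ P_{k-1}$ from (\ref{indutiva}), with base case $P_0=I$ and the conventions $\mu_0=1$, $\mu_\ell=0$ for $\ell<0$. The inductive step rests on two facts: the trace relation $a_k=(-1)^k\mu_k$ valid in case IV (equation (\ref{F2})), and the Newton-type recursion $\mu_\ell^J=\kappa_m\mu_{\ell-1}^{J\cup\{m\}}+\mu_\ell^{J\cup\{m\}}$ obtained from (\ref{F}) applied to nested index sets. Concretely, the $E_2$ and $E_j$ ($j\geq 4$) components reduce to the single steps $\mu_k-\kappa\mu_{k-1}^{1}=\mu_k^{1}$ and $\mu_k-\kappa_j\mu_{k-1}^{j}=\mu_k^{j}$; the $E_3$ component additionally needs $\mu_{k-1}^{1}-\kappa\mu_{k-2}^{1,2}=\mu_{k-1}^{1,2}$; and the $E_1$ row, which carries both an $E_2$ and an $E_3$ coefficient, requires applying the recursion twice, through the nested chain $\{1\}\subset\{1,2\}\subset\{1,2,3\}$, to produce the terms $\mu_{k-2}^{1,2,3}$ and $\mu_{k-1}^{1,2}$. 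Pushing the inductive hypothesis for $P_{k-1}$ through $S$ and matching coefficients then closes the induction.

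The main obstacle I anticipate is organizational rather than conceptual: keeping the null index-swap straight throughout the $\nabla S$ computation, so that the $E_1$- and $E_2$-coefficients are never confused, and on the $P_k$ side correctly nesting the index sets so that the double application of the Newton recursion produces exactly the $\mu_{k-2}^{1,2,3}$ and $\mu_{k-1}^{1,2}$ that appear in $P_kE_1$. Once the expansion of $\nabla_X E_i$ and the two $\mu$-identities are in place, both parts are a direct, if lengthy, verification.
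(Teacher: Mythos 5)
Your proposal is correct and coincides with the paper's own treatment: the paper simply declares these propositions straightforward and omits the proof, and your plan — expanding $\nabla_XE_i=-\omega_i^2(X)E_1-\omega_i^1(X)E_2+\sum_{j\geq3}\omega_i^j(X)E_j$ via the self-inverse Gram matrix of the null pair, then running the induction $P_k=a_kI+S\circ P_{k-1}$ with $a_k=(-1)^k\mu_k$ and the nested identities $\mu_\ell^J=\kappa_m\mu_{\ell-1}^{J\cup\{m\}}+\mu_\ell^{J\cup\{m\}}$ through the chain $\{1\}\subset\{1,2\}\subset\{1,2,3\}$ — is exactly that verification and closes in every component. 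One remark: carrying out your bookkeeping faithfully gives the $E_2$-coefficient of $(\nabla_XS)E_3$ as $-\omega_2^1(X)=+\omega_1^2(X)$ and the summand $\omega_2^j(X)$ in $(\nabla_XS)E_2$, so the printed $-\omega_1^2(X)$ and $\omega_i^j(X)$ in the proposition are typos in the statement, not defects in your argument.
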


In the following lemma we present two new properties of the Newton transformations. For any differentiable function $f\in\cm$, the gradient of $f$ is the vector field $\nabla f$ metrically equivalent to $df$, which is characterized by $\<\nabla f,X\>=X(f)$, for every differentiable vector field $X\in\mathfrak{X}(M)$. The divergence of a vector field $X$ is the differentiable function defined as the trace of operator $\nabla X$, where $\nabla X(Y):=\nabla_YX$, that is,
\[
\div(X)=\tr(\nabla X)=\sum_{i,j}g^{ij}\<\nabla_{E_i}X,E_j\>,
\]
$\{E_i\}$ being any local frame of tangent vectors fields, where $(g^{ij})$ represents the inverse of the metric $(g_{ij})=(\<E_i,E_j\>)$. Analogously, the divergence of a operator $T:\mathfrak{X}(M)\longrightarrow\mathfrak{X}(M)$ is the vector field $\div(T)\in\mathfrak{X}(M)$ defined as the trace of $\nabla T$, that is,
\[
\div(T)=\tr(\nabla T)=\sum_{i,j}g^{ij}(\nabla_{E_i}T)E_j,
\]
where $\nabla T(E_i,E_j)=(\nabla_{E_i}T)E_j$.

\begin{lema}\label{L2}
The Newton transformation $P_k$, for $k=0,\ldots,n-1$, satisfies:\\
(a) $\tr(\nabla_XS\circ P_k)=-X(a_{k+1})=-\<\nabla a_{k+1},X\>=\e C_k\<\nabla H_{k+1},X\>$.\\
(b) $\div(P_k)=0$.
\end{lema}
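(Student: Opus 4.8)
The plan is to prove (a) by a purely algebraic computation that is independent of the canonical type of $S$, and then to deduce (b) from (a) together with the Codazzi equation. For (a), I would start from the definition $P_k=\sum_{j=0}^k a_{k-j}S^j$, so that by linearity of the trace $\tr(\nabla_X S\circ P_k)=\sum_{j=0}^k a_{k-j}\tr\paren{(\nabla_X S)\circ S^j}$. The elementary observation is that, since $\nabla_X$ is a derivation and the trace is cyclic, the Leibniz expansion $\nabla_X(S^{j+1})=\sum_{r=0}^j S^r(\nabla_X S)S^{j-r}$ yields $\tr\paren{(\nabla_X S)\circ S^j}=\tfrac1{j+1}X(\tr(S^{j+1}))$. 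Note that this uses only trace identities, so it is valid for all four canonical types of $S$ at once, without invoking Propositions \ref{caso1}--\ref{caso4}. Substituting and reindexing with $i=j+1$ reduces the first equality of (a) to the scalar identity $\sum_{i=1}^{k+1}\tfrac1i\,a_{k+1-i}\,X(\tr(S^i))=-X(a_{k+1})$.

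The core of the argument, and the step I expect to require the most care, is this last identity. I would prove the sharper statement $\sum_{i=1}^{m}\tfrac1i\,a_{m-i}\,X(\tr(S^i))=-X(a_m)$ for every $m\geq1$ by induction on $m$, the base case $m=1$ being just $X(\tr S)=-X(a_1)$. For the inductive step I would differentiate the Leverrier--Faddeev relation \eqref{C.Pol} in the form $m\,a_m=-\sum_{j=1}^m a_{m-j}\tr(S^j)$, apply the inductive hypothesis to each $X(a_{m-j})$, and then collect the coefficient of each $X(\tr(S^i))$. The bookkeeping collapses because the resulting inner sum $\sum_{j=1}^{m-i}a_{m-i-j}\tr(S^j)$ is itself of Leverrier--Faddeev type and equals $-(m-i)a_{m-i}$; matching the coefficients of $X(\tr(S^i))$ then gives exactly $\tfrac{m}{i}a_{m-i}$ on both sides. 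Once the first equality of (a) is established, the second is simply the definition of the gradient, and the third follows from $-(k+1)a_{k+1}=\tr(S\circ P_k)=\e c_kH_{k+1}$ in Lemma \ref{L1}(c), which gives $a_{k+1}=-\e C_kH_{k+1}$ and hence $-X(a_{k+1})=\e C_k\<\nabla H_{k+1},X\>$.

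For (b) I would argue by induction on $k$ using the recursion \eqref{indutiva}, $P_k=a_kI+S\circ P_{k-1}$, with $\div P_0=0$ trivial. Writing $(\nabla_{E_i}P_k)E_j=E_i(a_k)E_j+(\nabla_{E_i}S)(P_{k-1}E_j)+S\paren{(\nabla_{E_i}P_{k-1})E_j}$ and contracting with $g^{ij}$ gives $\div P_k=\nabla a_k+\sum_{i,j}g^{ij}(\nabla_{E_i}S)(P_{k-1}E_j)+S(\div P_{k-1})$, where the last term vanishes by the inductive hypothesis. The crucial input is that on a hypersurface of the space form $\mb$ the shape operator is a Codazzi tensor, $(\nabla_X S)Y=(\nabla_Y S)X$, which holds because the ambient has constant sectional curvature. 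Pairing the middle term with an arbitrary $Y$ and using that both $\nabla_{E_i}S$ and $P_{k-1}$ are self-adjoint (Lemma \ref{L1}(a)) together with Codazzi, I would rewrite it as the contraction $\tr(P_{k-1}\circ(\nabla_Y S))$, which by part (a) equals $-Y(a_k)=-\<\nabla a_k,Y\>$. Since $Y$ is arbitrary, the middle term is $-\nabla a_k$, and therefore $\div P_k=\nabla a_k-\nabla a_k=0$. In this way part (b) is a formal consequence of part (a), so the only genuine obstacle is the differentiated Newton-identity bookkeeping of the second paragraph.
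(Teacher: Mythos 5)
Your argument is correct, but there is nothing in this paper to compare it against line by line: the authors give no proof of Lemma~\ref{L2} here, deferring entirely to \cite{LR2010a}. Measured against the methodology of the paper (and of \cite{LR2010a}), whose computations systematically split into the four canonical types I--IV of the shape operator via the frame-adapted formulas of Propositions~\ref{caso1}--\ref{caso4}, your route is genuinely different: it is completely type-independent. For (a) you use only that the trace is cyclic and commutes with $\nabla_X$ (both valid for any metric connection, in any signature), giving $\tr\paren{(\nabla_XS)\circ S^j}=\tfrac{1}{j+1}X\big(\tr(S^{j+1})\big)$, and then you differentiate the Leverrier--Faddeev recursion (\ref{C.Pol}). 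I checked your induction and the bookkeeping closes exactly as claimed: after applying the inductive hypothesis to each $X(a_{m-j})$, the coefficient of $X(\tr(S^i))$ contains the inner sum $\sum_{j=1}^{m-i}a_{m-i-j}\tr(S^j)$, which is again of Leverrier--Faddeev type and equals $-(m-i)a_{m-i}$, so both sides carry the coefficient $-\tfrac{m}{i}a_{m-i}$; the boundary case $i=m$ works because $a_0=1$ is constant, and all indices stay in the range $1,\ldots,n$ where (\ref{C.Pol}) holds since $m=k+1\leq n$. The final equality via Lemma~\ref{L1}(c), namely $a_{k+1}=-\e C_kH_{k+1}$, is also correct. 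For (b), your induction on the recursion (\ref{indutiva}) is sound; the single geometric input is that $S$ is a Codazzi tensor, $(\nabla_XS)Y=(\nabla_YS)X$, which holds precisely because $\mathbb{M}^{n+1}_c$ has constant curvature, and together with the self-adjointness of $\nabla_YS$ and of $P_{k-1}$ (Lemma~\ref{L1}(a)) it converts the middle contraction into $\tr\big(P_{k-1}\circ\nabla_YS\big)=-Y(a_k)$ by part (a), cancelling the $\nabla a_k$ term. What your approach buys is a self-contained proof that never touches the classification of $S$ and would work verbatim for hypersurfaces in pseudo-Riemannian space forms of arbitrary index; what the frame-based machinery buys the authors is the explicit componentwise data on $\nabla S$ and $P_k$ that they need anyway for Lemma~\ref{L7}, so for them the citation carries no extra cost.
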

The proof can be found in \cite{LR2010a}.

Bearing in mind this lemma we obtain
\[
\div(P_{k}(\nabla f))=\tr\big(P_{k}\circ\nabla^2 f \big),
\]
where $\nabla^2f:\mathfrak{X}(M)\longrightarrow\mathfrak{X}(M)$ denotes the self-adjoint linear operator metrically equivalent to the Hessian of $f$, given by
\[
\<\nabla^2f(X),Y\>=\<\nabla_X(\nabla f),Y\>,\qquad X,Y\in\mathfrak{X}(M).
\]
Associated to each Newton transformation $P_k$, we can define the second-order linear differential operator $L_k:\cm\longrightarrow\cm$ given by
\begin{align}\label{E9}
L_k(f)=\tr\big(P_{k}\circ\nabla^2 f \big).	
\end{align}
When $k=0$, $L_0=\Delta$ is nothing but the Laplacian operator; when $k=1$, $L_1$ is the operator $\Box$ introduced by Chen and Yau, \cite{CY}.

An interesting property of $L_k$ is the following. For every couple of differentiable functions $f,g\in C^\infty(M)$ we have
\begin{align}\nonumber
L_k(fg)&=\div\big(P_k\circ\nabla(fg)\big)=\textrm{div}\big(P_k\circ( g\nabla f + f\nabla g)\big)\\
	   &=gL_k(f)+ fL_k(g) +2\<P_k(\nabla f),\nabla g\>.\label{Lkfg}
\end{align}

\section{Examples}
\label{s:examples}

The goal of this section is to show some examples of hypersurfaces in the Lorentzian space form $\mathbb{M}_c^{n+1}$ satisfying the condition $L_k\psi=A\psi+b$, where $A$ is a constant matrix and $b$ is a constant vector. Before that, we are going to compute $L_k$ acting on the coordinate components of the immersion $\psi$, that is, a function given by $\<a,\psi\>$, where $\va\in \mathbb{R}^{n+2}_q$ is an arbitrary fixed vector.

A direct computation shows that
\begin{equation}\label{E10-}
\nabla\<\va,\psi\>=\va^\top=\va-\varepsilon\<\va,N\>N-c\<\va,\psi\>\psi,
\end{equation}
where $\va^\top\in \mathfrak{X}(M)$ denotes the tangential component of $\va$. Taking covariant derivative in~(\ref{E10-}), and using that $\nabla^0_Xa=0$, jointly with the Gauss and Weingarten formulae, we obtain
\begin{equation}\label{E11}
\nabla_X\nabla\< \va,\psi\>=\nabla_X\va^\top=\varepsilon\< \va,N\> SX-c\< \va,\psi\> X,
\end{equation}
for every vector field $X\in \mathfrak{X}(M)$. Finally, by using (\ref{E9}) and Lemma~\ref{L1}, we find that
\begin{align}\nonumber
L_k\<\va,\psi\>&=\e\<\va,N\>\tr(P_{k}\circ S)-c\<\va,\psi\>\tr(P_{k}\circ I)\\
               &=c_kH_{k+1}\<\va,N\>-cc_kH_k\<\va,\psi\>.\label{E12}
\end{align}
Then we can compute $L_k\psi$ as follows,
\begin{align}\label{E13}
L_k\psi &=\Big(L_k(\delta_1\<\psi,e_1\>),\ldots,L_k(\delta_{n+2}\<\psi,e_{n+2}\>)\Big)\nonumber\\
		 &=c_kH_{k+1}\Big(\delta_1\<e_1,N\>,\ldots,\delta_{n+2}\<e_{n+2},N\>\Big)-cc_kH_k\Big(\delta_1\<e_1,\psi\>,\ldots,\delta_{n+2}\<e_{n+2},\psi\>\Big)\nonumber\\
		&=c_kH_{k+1}N-cc_kH_k\psi,
\end{align}
where $\{e_1,\dots,e_{n+2}\}$ stands for the standard orthonormal basis in $\mathbb{R}^{n+2}_q$ and $\delta_i=\<e_i,e_i\>$.

\begin{ejem}\label{ej1}
An easy consequence of (\ref{E13}) is that every hypersurface with $H_{k+1}\equiv0$ and constant $k$-th mean curvature $H_{k}$ trivially satisfies $L_k\psi=\vA\psi+\vb$, with $\vA=-cc_kH_kI_{n+2}\in\mathbb{R}^{(n+2)\times(n+2)}$ and $\vb=0$.
\end{ejem}

\begin{ejem}(Totally umbilical hypersurfaces in $\mathbb{M}^{n+1}_c$)\quad\label{ej2}
As is well known, totally umbilical hypersurfaces in $\mathbb{M}^{n+1}_c$ are obtained as the intersection of $\mathbb{M}^{n+1}_c$ with a hyperplane of $\mathbb{R}^{n+2}_q$, and the causal character of the hyperplane determines the type of the hypersurface. More precisely, let $\va\in \mathbb{R}^{n+2}_q$ be a non-zero constant vector with $\<\va,\va\>\in\{1,0,-1\}$, and take the differentiable function $f_{\va}:\mathbb{M}^{n+1}_c\rightarrow\mathbb{R}$ defined by $f_{\va}(x)=\<a,x\>$. It is not difficult to see that for every $\tau\in \mathbb{R}$ with $\<\va,\va\>-c\tau^2\neq 0$, the set
\[
M_{\tau}=f^{-1}_\va(\tau)=\{x\in\mathbb{M}^{n+1}_c\;|\;\<\va,x\>=\tau\}
\]
is a totally umbilical hypersurface in $\mathbb{M}^{n+1}_c$, with Gauss map
\[
N(x)=\frac{1}{\sqrt{|\<\va,\va\>-c\tau^2|}}\ (a-c\tau x),
\]
and shape operator
\begin{equation}\label{B1+}
SX=-\nabla^0_XN=\frac{c\tau}{\sqrt{|\<\va,\va\>-c\tau^2|}} X.
\end{equation}
Now, by using (\ref{C.H_k}) and (\ref{F2}), we obtain that the $k$-th mean curvature is given by
\begin{equation}\label{B}
H_{k}=\frac{(\e c \tau)^k}{|\<\va,\va\>-c\tau^2|^{k/2}},\quad k=0,\ldots,n,
\end{equation}
where $\e=\<N,N\>=\pm1$. Therefore, by equation (\ref{E13}), we see that $M_\tau$ satisfies the condition $L_k\psi=\vA\psi+\vb$, for every $k=0,\ldots,n-1$, with
\[
\vA=-\frac{c_k(\e c\tau)^{k}\big(\e\tau^2+c|\<\va,\va\>-c\tau^2|\big)}{|\<\va,\va\>-c\tau^2|^{(k+2)/2}} I_{n+2}\quad\textrm{and}\quad \vb=\frac{c_k(\e c\tau)^{k+1}}{|\<\va,\va\>-c\tau^2|^{(k+2)/2}} \va.
\]
In particular, $b=0$ only when $\tau=0$, and then $M_0$ is a totally geodesic hypersurface in $\mathbb{M}^{n+1}_c$.

It is easy to see, from~(\ref{B1+}), that $M_\tau$ has constant curvature
\[
K=c+\dfrac{\tau^2}{\< \va,\va\>-c\tau^2},
\]
and it is a Riemannian or Lorentzian hypersurface according to $\< \va,\va\>-c\tau^2$ is negative or positive, respectively.

Now we will see the different possibilities.

\noindent\textbullet\kern8pt Case $c=1$. Then $M_\tau\subset\mathbb{M}_c^{n+1}=\mathbb{S}_1^{n+1}\subset\mathbb{R}_1^{n+2}$ and we have:
\begin{enumerate}
\item[i)] If $\<\va,\va\>=-1$, then $K=1/(\tau^2+1)$, $\e=-1$, and $M_\tau$ is isometric to a round sphere of radius $\sqrt{\tau^2+1}$, $M_\tau\equiv\mathbb{S}^n(\sqrt{\tau^2+1})$.
\item[ii)] If $\<\va,\va\>=0$, then $\tau\neq 0$, $K=0$, $\e=-1$, and $M_\tau$ is isometric to the Euclidean space, $M_\tau\equiv\mathbb{R}^n$.
\item[iii)] If $\<\va,\va\>=1$, then either $|\tau|>1$, $K=-1/(\tau^2-1)$, $\e=-1$, and $M_\tau$ is isometric to the hyperbolic space of radius $-\sqrt{\tau^2-1}$, $M_\tau\equiv\mathbb{H}^n(-\sqrt{\tau^2-1})$, or $|\tau|<1$, $K=1/(1-\tau^2)$, $\e=1$, and $M_\tau$ is isometric to a De Sitter space of radius $\sqrt{1-\tau^2}$, $M_\tau\equiv\mathbb{S}^n_1(\sqrt{1-\tau^2})$.
\end{enumerate}

\noindent\textbullet\kern8pt Case $c=-1$. Then $M_\tau\subset\mathbb{M}_c^{n+1}=\mathbb{H}_1^{n+1}\subset\mathbb{R}_2^{n+2}$ and we have:
\begin{enumerate}
\item[i)] If $\<\va,\va\>=-1$, then either $|\tau|>1$, $K=1/(\tau^2-1)$, $\e=1$, and $M_\tau$ is isometric to a De Sitter space of radius $\sqrt{\tau^2-1}$, $M_\tau\equiv\mathbb{S}^n_1(\sqrt{\tau^2-1})$, or $|\tau|<1$, $K=-1/(1-\tau^2)$, $\e=-1$, and $M_\tau$ is isometric to a hyperbolic space of radius $-\sqrt{1-\tau^2}$, $M_\tau\equiv\mathbb{H}^n(\sqrt{1-\tau^2})$.
\item[ii)] If $\<\va,\va\>=0$, then $\tau\neq0$, $K=0$, $\e=1$, and $M_\tau$ is isometric to the Lorentz-Minkowski space, $M_\tau\equiv\mathbb{R}^n_1$.
\item[iii)] If $\<\va,\va\>=1$, then $K=-1/(\tau^2+1)$, $\e=1$, and $M_\tau$ is isometric to the Lorentzian hyperbolic space, $M_\tau\equiv\mathbb{H}^n_1(-\sqrt{\tau^2+1})$.
\end{enumerate}

\end{ejem}

\begin{ejem}(Standard pseudo-Riemannian products in $\mathbb{M}^{n+1}_c$)\quad\label{ej3}
Let $f:\mathbb{M}_c^{n+1}\longrightarrow\mathbb{R}$ be the differentiable function defined by
\[
f(x)=cx_2^2+\delta_1\Big(-x^2_1+x_3^2+\ldots+x^2_{m+1}\Big)+\delta_2\Big(x^2_{m+2}+\ldots+x^2_{n+2}\Big),
\]
where $m\in\{1,\ldots,n\}$ and $\delta_1,\delta_2\in\{0,1\}$ with $\delta_1+\delta_2=1$. In short, $f(x)=\<Dx,x\>$, where $D$ is the matrix $D=\textrm{diag}[\delta_1,1,\delta_1\ldots,\delta_1,\delta_2,\ldots,\delta_2]$. Then, for every $r>0$ and $\rho=\pm1$ with $r^2- c\rho\neq0$,  the level set $M^n=f^{-1}(\rho r^2)$ is a hypersurface in $\mathbb{M}_c^{n+1}$, provided that $(\delta_1,\delta_2,\rho,c)\not\in\{(0,1,-1,1),(1,0,1,-1)\}$.

The Gauss map is given by
\begin{equation}\label{Nex3}
N(x)=\frac{\ol{\nabla} f(x)}{|\ol{\nabla} f(x)|}=\frac{1}{r\sqrt{\big|\rho-c\tau^2\big|}}\ (Dx-\rho cr^2x),
\end{equation}
and the shape operator is
\[
S=\frac{-1}{r\sqrt{\big|\rho-cr^2\big|}}\begin{bmatrix}
	(\delta_1-\rho cr^2)I_m\\
	&(\delta_2-\rho cr^2)I_{n-m}
\end{bmatrix}.
\]
In other words, $M^n$ has two principal curvatures
\[
\kappa_1=\dfrac{\rho cr^2-\delta_1}{r\sqrt{|\rho-cr^2|}}\quad\textrm{and}\quad
\kappa_2=\dfrac{\rho cr^2-\delta_2}{r\sqrt{|\rho-cr^2|}},
\]
with multiplicities $m$ and $n-m$, respectively. In particular, every mean curvature $H_k$ is constant. Therefore, by using~(\ref{E13}) and (\ref{Nex3}), we get that
\[
L_k\psi=c_kH_{k+1}N\circ \psi-cc_kH_k\psi
=\Big(\lambda\psi_1,\theta\psi_2,\lambda\psi_3,\ldots,\lambda\psi_m,\mu\psi_{m+1},\ldots,\mu\psi_{n+2}\Big),
\]
where
\[
\lambda=\frac{cc_kH_{k+1}(\delta_1-\rho cr^2)}{r\sqrt{\big|\rho-cr^2\big|}}-cc_kH_k,\qquad\theta=\frac{cc_kH_{k+1}(1-\rho cr^2)}{r\sqrt{\big|\rho-cr^2\big|}}-cc_kH_k,
\]
and
\[
\mu=\frac{cc_kH_{k+1}(\delta_2-\rho cr^2)}{r\sqrt{\big|\rho-cr^2\big|}}-cc_kH_k.
\]
That is, $M^n$ satisfies the condition $L_k\psi=\vA\psi+\vb$, with $\vb=0$ and  $\vA=\textrm{diag}[\lambda,\theta,\lambda,\ldots\lambda,\mu,\ldots,\mu]$.

The following two tables show the different hypersurfaces in $\mathbb{M}_c^{n+1}$.

\begin{center}
\begin{tabular}{|c|c|c|c|c|}
\hline
\multicolumn{5}{|c|}{Case $c=1$: Standard products in $\mathbb{S}_1^{n+1}$}\\\hline
$\delta_1$ & $\delta_2$ & $\rho$ & Hypersurface & $S$ \\\hline
1 & 0 & 1 & $\mathbb{S}^m_1(r)\times\mathbb{S}^{n-m}(\sqrt{1-r^2})$ &
    \rule[-16pt]{0pt}{40pt}
    $\begin{bmatrix}
    -\frac{\sqrt{1-r^2}}{r}I_m&\!\!0\\
    0&\!\!\frac{r}{\sqrt{1-r^2}}I_{n-m}
     \end{bmatrix}$  \\\hline
1 & 0 & $-1$ & $\mathbb{H}^m(-r)\times\mathbb{S}^{n-m}(\sqrt{1+r^2})$ &
    \rule[-16pt]{0pt}{40pt}
    $\begin{bmatrix}
    -\frac{\sqrt{1+r^2}}{r}I_m&\!\!0\\
    0&\!\!\frac{-r}{\sqrt{1+r^2}}I_{n-m}
     \end{bmatrix}$ \\\hline
0 & 1 & 1 & $\begin{array}{c}
    \mathbb{S}^m_1(\sqrt{1-r^2})\times\mathbb{S}^{n-m}(r)\\
    \mathbb{H}^m(-\sqrt{r^2-1})\times\mathbb{S}^{n-m}(r)
    \end{array}$  &
    \rule[-20pt]{0pt}{45pt}
    $\begin{bmatrix}
    \frac{r}{\sqrt{|r^2-1|}}I_m&\!\!0\\
    0&\!\!\frac{r^2-1}{r\sqrt{|r^2-1|}}I_{n-m}
     \end{bmatrix}$ \\\hline
\end{tabular}
\end{center}

\begin{center}
\begin{tabular}{|c|c|c|c|c|}
\hline
\multicolumn{5}{|c|}{Case $c=-1$: Standard products in $\mathbb{H}_1^{n+1}$}\\\hline
$\delta_1$ & $\delta_2$ & $\rho$ & Hypersurface & $S$ \\\hline
1 & 0 & $-1$ & $\mathbb{H}^m_1(-r)\times\mathbb{S}^{n-m}(\sqrt{r^2-1})$ &
    \rule[-16pt]{0pt}{40pt}
    $\begin{bmatrix}
    \frac{\sqrt{r^2-1}}{r}I_m&\!\!0\\
    0&\!\!\frac{r}{\sqrt{r^2-1}}I_{n-m}
     \end{bmatrix}$  \\\hline
0 & 1 & $1$ & $\mathbb{H}^m(-\sqrt{1+r^2})\times\mathbb{S}_1^{n-m}(r)$ &
    \rule[-16pt]{0pt}{40pt}
    $\begin{bmatrix}
    \frac{-r}{\sqrt{1+r^2}}I_m&\!\!0\\
    0&\!\!-\frac{\sqrt{1+r^2}}{r}I_{n-m}
     \end{bmatrix}$ \\\hline
0 & 1 & $-1$ & $\begin{array}{c}
    \mathbb{S}^m_1(\sqrt{r^2-1})\times\mathbb{H}^{n-m}(-r)\\
    \mathbb{H}^m(-\sqrt{1-r^2})\times\mathbb{H}^{n-m}(-r)
    \end{array}$  &
    \rule[-20pt]{0pt}{45pt}
    $\begin{bmatrix}
    \frac{r}{\sqrt{|r^2-1|}}I_m&\!\!0\\
    0&\!\!\frac{r^2-1}{r\sqrt{|r^2-1|}}I_{n-m}
     \end{bmatrix}$ \\\hline
\end{tabular}
\end{center}

\end{ejem}

\begin{ejem}(A quadratic hypersurface with non-diagonalizable shape operator)\label{ej4}\quad
The hypersurfaces shown in Examples \ref{ej2} and \ref{ej3} have diagonalizable shape operators. However, since we are working in a Lorentzian space form, it seems natural thinking of hypersurfaces with non-diagonalizable shape operator satisfying $L_k\psi=A\psi+b$. Let $R$ be a self-adjoint endomorphism of $\mathbb{R}^{n+2}_q$, that is, $\left< Rx,y\right> =\left< x,Ry\right>$, for all $x,y\in\mathbb{R}^{n+2}_q$. Let $f:\mb\fle\R{}$ be a quadratic function defined by $f(x)=\left< Rx,x\right>$, and assume that the minimal polynomial of $R$ is given by $\mu_R(t)=t^2+at+b$, $a,b\in\R{}$, with $a^2-4b\leq0$. Then, by computing the gradient in $\mb$ at each point $x\in\mb$, we have $\ol\nabla f(x)=2Rx-2cf(x)x$.

Let us consider the level set $M=f^{-1}(d)$, for a real constant $d$. Then, at a point $x$ in $M$, we have
\[
\left<\ol\nabla f(x),\ol\nabla f(x)\right>=4\left< R^2x,x\right> -4cf(x)^2=-4c\mu_R(cd),
\]
where we have used that $R^2x=-aRx-bx$. Then, for every $d\in\R{}$ with $\mu_R(cd)\neq 0$, $M=f^{-1}(d)$ is a Lorentzian hypersurface in $\mb$. The Gauss map at a point $x$ is given by
\begin{equation}\label{B1.ej4}
N(x)=\frac{1}{|\mu_R(cd)|^{1/2}}\ (Rx-cd x),
\end{equation}
and thus the shape operator is given by
\begin{equation}\label{S.ej4}
SX=-\frac{1}{|\mu_R(cd)|^{1/2}}(RX-cdX),
\end{equation}
for every tangent vector field $X$. From here, and bearing in mind that $R^2+aR+bI=0$, we obtain that
\[
S^2X=-\frac{1}{|\mu_R(cd)|}\left((a+2cd)RX+(b-d^2)X\right),
\]
for every tangent vector field $X$. At this point, it is very easy to deduce that
\[
\mu_S(t)=t^2-\frac{a+2cd}{|\mu_R(cd)|^{1/2}}t+\frac{b+d^2+acd}{|\mu_R(cd)|}
\]
is the minimal polynomial of $S$, and that every $k$-th mean curvature is constant. On the other hand, since the discriminant of $\mu_S(t)$ is not positive, the shape operator is non-diagonalizable.

Finally, from (\ref{E13}), we obtain that $L_k\psi=A\psi$, where $A$ is the matrix given by
\[
A=\frac{c_kH_{k+1}}{|\mu_R(cd)|^{1/2}}R-\left(\frac{c_kH_{k+1}cd}{|\mu_R(cd)|^{1/2}}+cc_kH_k\right)I.
\]
\end{ejem}

\section{First results}
\label{s:first.results}

In this section we need to compute $L_kN$, and to do that we are going to compute the operator $L_k$ acting on the coordinate functions of the Gauss map $N$, that is, the functions $\<a,N\>$ where $a\in\mathbb{R}^{n+2}_q$ is an arbitrary fixed vector. A straightforward computation yields
\begin{equation*}
\nabla\< \va,N\>=-S\va^\top.
 \end{equation*}
From Weingarten formula and~(\ref{E11}), we find that
\begin{align*}
\nabla_X\nabla\<\va,N\>&=- \nabla_X(S\va^\top)=-(\nabla_XS)\va^\top -S(\nabla_X\va^\top)\\
&=-(\nabla_{\va^\top}S)X-\varepsilon\<\va,N\>S^2X+c\<\va,\psi\>SX,
\end{align*}
for every tangent vector field $X$. This equation, jointly with Lemma~\ref{L1} and~(\ref{E9}), yields
\begin{align}\label{E15}
L_k\<\va,N\>
    &=-\tr(P_{k}\circ\nabla_{\va^\top}S)-\e\<\va,N\>\tr(P_{k}\circ S^2) +c\<\va,\psi\>\tr(P_{k}\circ S)\nonumber\\
	&=-\e C_k\<\nabla H_{k+1},\va^\top\>-\e C_k(nH_1H_{k+1}-(n-k-1)H_{k+2})\<\va,N\>\nonumber\\
	&\quad+\e cc_kH_{k+1}\<\va,\psi\>.
\end{align}
In other words,
\begin{equation}\label{E16}
L_kN=-\varepsilon C_k\nabla H_{k+1}-\varepsilon C_k\Big(nH_1H_{k+1}-(n-k-1)H_{k+2}\Big)N+\varepsilon cc_kH_{k+1}\psi.
\end{equation}
On the other hand, equations~(\ref{Lkfg}) and (\ref{E12}) lead to
\begin{align*}
L_k(L_k\<\va,\psi\>)&=c_kH_{k+1}L_k\<\va,N\>+L_k(c_kH_{k+1})\<\va,N\>+2c_k\big\langle P_k(\nabla H_{k+1}),\nabla\<\va,N\>\big\rangle\\
&\quad - cc_kH_{k}L_k\<\va,\psi\>- L_k(cc_kH_{k})\<\va,\psi\>-2cc_k\big\langle P_k(\nabla H_{k}),\nabla\<\va,\psi\>\big\rangle,
\end{align*}
and by using again~(\ref{E12}) and~(\ref{E15}) we get that
\begin{align*}
L_k\big(L_k\<\va,\psi\>\big)&=-\varepsilon c_kC_kH_{k+1}\big\langle \nabla H_{k+1},\va\big\rangle-2c_k\big\langle(S\circ P_k)(\nabla H_{k+1}),\va\big\rangle-2cc_k\big\langle P_k(\nabla H_{k}),\va\big\rangle \\
&\quad -\big[\e C_k H_{k+1}\big(nH_1H_{k+1}-(n-k-1)H_{k+2}\big)+cc_kH_kH_{k+1}-L_k(H_{k+1})\big]c_k\big\langle \va,N\big\rangle\\
&\quad +\big[\varepsilon cc_kH^2_{k+1}+c_kH_k^2-cL_k(H_k)\big]c_k\< \va,\psi\>.
\end{align*}
Therefore, we get
\begin{align}\label{J1}
L_k\big(L_k\psi\big)&=-\varepsilon c_kC_kH_{k+1}\nabla H_{k+1}-2c_k(S\circ P_k)(\nabla H_{k+1})-2cc_k P_k(\nabla H_{k})\nonumber \\
&\quad -\big[\e C_k H_{k+1}\big(nH_1H_{k+1}-(n-k-1)H_{k+2}\big)+cc_kH_kH_{k+1}-L_k(H_{k+1})\big]c_kN\nonumber\\
&\quad +\big[\e cc_kH^2_{k+1}+c_kH_k^2-cL_k(H_k)\big]c_k\psi.
\end{align}
\indent Let us assume that, for a fixed $k=0,1,\ldots,n-1$, the immersion $\psi:M^n\longrightarrow\mathbb{M}^{n+1}_c$ satisfies the condition
\begin{align}\label{E17}
L_k\psi&=\vA\psi+\vb,
\end{align}
for a constant matrix $\vA\in\mathbb{R}^{(n+2)\times(n+2)}$ and a constant vector $\vb\in\mathbb{R}_q^{n+2}$.
Then we have $L_k\big(L_k\psi\big)=\vA L_k\psi$, that, jointly with~(\ref{J1}) and~(\ref{E13}), yields
\begin{align}\label{E19}
H_{k+1}\vA N-cH_k\vA\psi&=-\e C_kH_{k+1}\nabla H_{k+1}-2(S\circ P_k)(\nabla H_{k+1})-2c P_k(\nabla H_{k})\nonumber\\
& -\big[\varepsilon C_k H_{k+1}\big(nH_1H_{k+1}-(n-k-1)H_{k+2}\big)+cc_kH_kH_{k+1}-L_k(H_{k+1})\big]N\nonumber\\ &+\big[\varepsilon cc_kH^2_{k+1}+c_kH_k^2-cL_k(H_k)\big]\psi.
\end{align}
On the other hand, from (\ref{E17}), and using again (\ref{E13}), we have
\begin{align}\label{E17A}
\vA\psi&=c_kH_{k+1} N-cc_kH_k\psi-\vb^\top-\varepsilon\< \vb,N\> N-c\< \vb,\psi \>\psi\nonumber\\
&=-\vb^\top+\big[c_kH_{k+1}-\varepsilon\< \vb,N\>\big]N-\big[cc_kH_k+c\< \vb,\psi \>\big]\psi,
\end{align}
where $b^\top\in\mathfrak{X}(M)$ denotes the tangential component of $b$. Finally, from here and (\ref{E19}), we get
\begin{align}\label{E19A}
H_{k+1}\vA N&=-\e C_kH_{k+1}\nabla H_{k+1}-2(S\circ P_k)(\nabla H_{k+1})-2c P_k(\nabla H_{k})-cH_k\vb^\top\nonumber\\
&\quad-\big[\e C_k H_{k+1}\big(nH_1H_{k+1}-(n-k-1)H_{k+2}\big)+\e cH_k\< \vb,N\> -L_k(H_{k+1})\big]N\nonumber\\
&\quad+\big[\e cc_kH^2_{k+1}-H_k\< \vb,\psi\>- cL_k(H_k)\big]\psi.
\end{align}

\subsection{The case where $A$ is self-adjoint}
If we take covariant derivative in (\ref{E17}), and use equation (\ref{E13}) as well as Weingarten formula, we have
\begin{equation}\label{E18}
	\vA X=-c_kH_{k+1}SX-cc_kH_kX+c_k\<\nabla H_{k+1},X\> N-cc_k\<\nabla H_{k},X\>\psi,
\end{equation}
for every tangent vector field $X$, and therefore
\[
\<\vA X,Y\>=\< X,\vA Y\>,
\]
for every tangent vector fields $X,Y\in\mathfrak{X}(M)$. Therefore, $\vA$ is self-adjoint if and only if the following conditions hold
\begin{align}
\<\vA X,\psi\>&=\<X,\vA\psi\>,\label{21}\\
\<\vA X,N\>&=\< X,\vA N\>,\label{22}\\
\< \vA N,\psi\>&=\< N,\vA\psi\>,\label{23}
\end{align}
for every vector field $X\in\mathfrak{X}(M)$. From (\ref{E18}) and (\ref{E17A}), we easily see that (\ref{21}) is equivalent to
\begin{equation}\label{24}
\nabla\<\vb,\psi\>=\vb^\top=c_k\nabla H_{k},
\end{equation}
and so $\<\vb,\psi\>-c_kH_{k}$ is constant on $M$. A direct consequence is that
\begin{align}\label{26}
c_kL_k(H_k)=L_k\< \vb,\psi\>=\big[c_kH_{k+1}\< \vb,N\>-cc_kH_k\<\vb,\psi\>\big],
\end{align}
that, jointly with (\ref{E17A}) and (\ref{E19}), yields
\begin{align*}
H_{k+1}\< \vA N,\psi\>&=\e c_kH^2_{k+1}+cc_kH_k^2-L_k(H_k) +cH_k\< \vA\psi,\psi\>\\
&=\e c_kH^2_{k+1}+cc_kH_k^2-\big[H_{k+1}\< \vb,N\>-cH_k\< \vb,\psi\>\big] +cH_k\big[-c_kH_{k}-\< \vb,\psi\>\big]\\	
&=\e c_kH^2_{k+1}-H_{k+1}\< \vb,N\>\\
&=H_{k+1}\< N,\vA\psi\>.
\end{align*}
Therefore, at points where $H_{k+1}\neq0$, equation (\ref{21}) implies equation (\ref{23}). Even more, if $H_{k+1}\neq0$ then from (\ref{E18}), (\ref{E19A}) and (\ref{24}), it is easy to see that equation (\ref{22}) is equivalent to
\begin{align}\label{25}
\frac{2}{H_{k+1}}(S\circ P_k)(\nabla H_{k+1})+\varepsilon(k+2) C_k\nabla H_{k+1}=-\frac{c}{H_{k+1}}\big(2P_k(\nabla H_k)+c_kH_k\nabla H_{k}\big).	
\end{align}
The following auxiliar result is the key point in the proof of the main theorems.

\begin{lema}\label{L7}
Let $\psi:M\longrightarrow \mathbb{M}^{n+1}_c\subset\mathbb{R}^{n+2}_q$ be an orientable hypersurface satisfying the condition $L_k\psi=\vA\psi+\vb$, for a fixed $k=0,1,\ldots,n-1$, some self-adjoint constant matrix $\vA\in\mathbb{R}^{(n+2)\times(n+2)}$ and some constant vector $\vb\in\mathbb{R}^{n+2}_q$. Then $H_{k}$ is constant if and only if $H_{k+1}$ is constant.
\end{lema}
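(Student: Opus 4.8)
The plan is to turn each implication into the vanishing of a single tangent vector field lying in the kernel of an explicit self-adjoint operator, and then to kill that kernel using the canonical forms of $S$. The starting point is \eqref{24}: since $b^\top=c_k\nabla H_k$, the condition ``$H_k$ constant'' is equivalent to $b^\top=0$, and when it holds the functions $\langle b,\psi\rangle$ and $\langle b,N\rangle$ are also constant, because $\nabla\langle b,\psi\rangle=b^\top=c_k\nabla H_k$ and $\nabla\langle b,N\rangle=-S\,b^\top=-c_kS\nabla H_k$. Differentiating the expression \eqref{E17A} for $A\psi$, using $\nabla^0_X\psi=X$, the Gauss and Weingarten formulas, that $A$ is constant (so $\nabla^0_X(A\psi)=AX$) together with \eqref{24}, and comparing the tangential part with \eqref{E18}, I would obtain the Hessian identity
\[
c_k\nabla^2H_k=\varepsilon\langle b,N\rangle\,S-c\langle b,\psi\rangle\,I .
\]
This single computation is the main new ingredient; its normal and $\psi$-components turn out to be automatically satisfied.

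For the implication ``$H_k$ constant $\Rightarrow H_{k+1}$ constant'', I would feed $\nabla H_k=0$ into the Hessian identity to get $\varepsilon\langle b,N\rangle S=c\langle b,\psi\rangle I$ with constant coefficients. If $\langle b,N\rangle\neq0$ then $S$ is a constant multiple of the identity, so $M$ is totally umbilical with constant principal curvature and every $H_j$, in particular $H_{k+1}$, is constant. If $\langle b,N\rangle=0$ then also $\langle b,\psi\rangle=0$, whence $b=b^\top+\varepsilon\langle b,N\rangle N+c\langle b,\psi\rangle\psi=0$; this isolates the genuinely hard case $b=0$, in which the Hessian identity is vacuous and I must use \eqref{25} instead. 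Substituting $\nabla H_k=0$ there gives, on $U:=\{H_{k+1}\neq0\}$,
\[
\bigl(2\,S\circ P_k+\varepsilon(k+2)C_kH_{k+1}I\bigr)\nabla H_{k+1}=0 .
\]

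For the converse ``$H_{k+1}$ constant $\Rightarrow H_k$ constant'' I would split on the constant value of $H_{k+1}$. If it is a nonzero constant then $U=M$ and substituting $\nabla H_{k+1}=0$ into \eqref{25} yields $(2P_k+c_kH_kI)\nabla H_k=0$ on all of $M$. If $H_{k+1}\equiv0$ then \eqref{25} is empty, but substituting $H_{k+1}=0$ and $L_kH_{k+1}=0$ into \eqref{E19A}, whose tangential part collapses to the very same relation $(2P_k+c_kH_kI)\nabla H_k=0$ once $b^\top=c_k\nabla H_k$ is used (the $N$- and $\psi$-components then reduce, via \eqref{26}, to side conditions). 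Thus both directions reduce to the statement that a gradient lying in the kernel of an explicit self-adjoint operator built from $P_k$ (respectively $S\circ P_k$) must vanish.

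This kernel-vanishing is the main obstacle, and it is not purely algebraic: the operators $2P_k+c_kH_kI$ and $2\,S\circ P_k+\varepsilon(k+2)C_kH_{k+1}I$ do admit $0$ as an eigenvalue for some configurations of principal curvatures, so the argument must exploit that the vector in question is an actual gradient. I would therefore run the four canonical forms of $S$ from Propositions~\ref{caso1}--\ref{caso4}, using the explicit eigendata of $P_k$ given there together with the Codazzi information encoded in the $\nabla S$ formulas, to force the offending gradient to be zero; the non-diagonalizable types II, III and IV are the delicate cases, since one cannot simply pass to an eigenbasis. Finally I would patch the sets where \eqref{25} is silent: on $\mathrm{int}\{H_{k+1}=0\}$ the gradient vanishes trivially, and since $U\cup\mathrm{int}\{H_{k+1}=0\}$ is open and dense, continuity together with the connectedness of $M$ propagates constancy of $H_{k+1}$ (respectively $H_k$) to the whole of $M$.
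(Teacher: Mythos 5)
Your reductions are correct and track the paper's own proof closely: the Hessian identity $c_k\nabla^2H_k=\varepsilon\langle b,N\rangle S-c\langle b,\psi\rangle I$ is just equation \eqref{E11} applied to $a=b$ combined with \eqref{24}; your two kernel equations are exactly the paper's \eqref{E23} and \eqref{E26A} (the latter also obtained, as you say, from the tangential part of \eqref{E19A} when $H_{k+1}\equiv0$); and your split on $\langle b,N\rangle$ in the forward direction is a valid shortcut, though the paper does not need it, since \eqref{25} holds for arbitrary $b$ and already yields \eqref{E23} on the set where $\nabla H_{k+1}^2\neq 0$. The density/continuity patching at the end is also fine.

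The genuine gap is the kernel-vanishing step, which is the entire mathematical content of the lemma and which you leave as a plan --- a plan aimed at the wrong tool. The paper's argument (Cases 1--4 with Claims \ref{claim1}--\ref{claim10}; for the forward direction it cites \cite[Lemma 9]{LR2010a}) makes no use of Codazzi or of the $\nabla S$ formulas in Propositions \ref{caso1}--\ref{caso4}: only the $P_k$-eigendata enter. The actual mechanism is as follows. First a trace computation shows that some component $\langle\nabla H_k,E_i\rangle$ must vanish (otherwise $\tr(P_k)=-nD_kH_k$ forces $H_k=0$ on $\mathcal{V}$, impossible). Next an induction over subsets $J$ of the ``gradient directions'' upgrades the pointwise kernel condition to the symmetric-function identities $\mu_{k}^{J}=(-1)^{k+1}D_kH_k$ (with the appropriate variants in types II--IV). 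Then --- and this is the step your sketch has no substitute for --- the constancy of the matrix $A$ is invoked: for the directions with $\langle\nabla H_k,E_i\rangle=0$, equation \eqref{E18} gives $AE_i=\eta_iE_i$ with $\eta_i=-c_kH_{k+1}\kappa_i-cc_kH_k$ a constant eigenvalue of the fixed matrix $A$, so those $\kappa_i$ are affine in $H_k$ with constant coefficients; substituting into the $\mu_k^J$ identity produces $(-1)^{k+1}D_kH_k=B_0+B_1H_k+\cdots+B_kH_k^k$ with constant $B_i$, forcing $H_k$ to be locally constant, a contradiction. It is not the fact that the vector is a gradient that kills the kernel; the rigidity comes from $A$ being a single constant endomorphism of $\mathbb{R}^{n+2}_q$. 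Note also that recovering $\kappa_i$ from $\eta_i$ divides by $H_{k+1}$, so your uniform treatment of the converse breaks precisely in the subcase $H_{k+1}\equiv0$; the paper handles that subcase by an entirely different device ($\langle b,N\rangle=0$ on $\mathcal{V}$, hence $AN=\lambda N$ with $\lambda$ locally constant, and constancy of $\tr(A)$ forces $H_k$ locally constant), and any completion of your plan would need an analogous separate argument there.
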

\begin{proof}
Let us assume that $H_k$ is constant, and consider the open set
\[
\U=\{p\in M\;|\;\nabla H^2_{k+1}(p)\neq 0\}.
\]
Our goal is to show that $\U$ is empty. If $\U$ is not empty then, from (\ref{25}), we have that
\begin{equation}\label{E23}
(S\circ P_k)(\nabla H_{k+1})=-\frac{\varepsilon(k+2)C_k}{2}H_{k+1}\nabla H_{k+1}\quad\textrm{on } \U.	
 \end{equation}
Then reasoning exactly as Lucas and Ramírez in \cite[Lemma 9]{LR2010a} (starting from equation (26) in \cite{LR2010a}) we conclude that $H_{k+1}$ is locally constant on $\U$, which is not possible. The proof in \cite{LR2010a} also works here word by word, with the only difference that here $H_k$ is constant, and then (\ref{E18}) reduces now to
\[
	\vA X=-c_kH_{k+1}SX-cc_kH_kX+c_k\< \nabla H_{k+1},X\> N.
\]
Therefore, now we have $AE_i=-c_k\big(H_{k+1}\kappa_i+cH_k\big)E_i$, for $m+1\leq i\leq n$ (see the last part of the proof of \cite[Lemma 9]{LR2010a}). Since $H_k$ is constant, that makes no difference to the reasoning.

Conversely, let us assume that $H_{k+1}$ is constant, and suppose that the open set
\[
\V=\{p\in M\;|\;\nabla H^2_{k}(p)\neq 0\}
\]
is non-empty. First, let us consider the case where $H_{k+1}=0$. Then, from (\ref{24}) and (\ref{26}), equation (\ref{E19A}) reduces to
\[
-2cP_k(\nabla H_k)-cc_kH_k\nabla H_k-\e cH_k\< \vb,N\> N=0,
\]
and so $\<\vb,N\>=0$ on $\V$. From (\ref{E17A}), we have $\<\vA N,\psi\>=\<N,\vA\psi\>=0$ and $\<\vA N,X\>=\< N,\vA X\>=0$, and then $AN=\lambda N$, i.e., $N$ is an eigenvector of $A$ with corresponding eigenvalue $\lambda=\e\<\vA N,N\>$. In particular, $\lambda$ is locally constant on $\V$. Therefore,
\begin{align*}
\vA X&=-cc_kH_kX-cc_k\< \nabla H_{k},X\>\psi,\\
\vA N&=\lambda N,\\
\vA\psi&=-\vb^\top-(cc_kH_k+c\<\vb,\psi\>)\psi=-c_k\nabla H_k-(2cc_kH_k+\alpha)\psi,
\end{align*}
where $\alpha=c\<\vb,\psi\>-cc_kH_k$ is also locally constant on $\V$. Then since $\tr(\vA)=-ncc_kH_k+\lambda-2cc_kH_k-\alpha$ is constant, this implies that $H_k$ is locally constant on $\V$, which is a contradiction.

Let us consider now that $H_{k+1}$ is a non-zero constant. Then, from (\ref{25}), we get
\begin{equation}\label{E26A}
P_k(\nabla H_k)+D_kH_k\nabla H_{k}=0\quad \textrm{on } \V,
\end{equation}
where $D_k=c_k/2$. From now on, we will follow a similar reasoning to that given in \cite[Lemma 9]{LR2010a}. The proof continues according to the type of the shape operator $S$.

\paragraph{Case 1: \emph{S} is of type I.}
Consider $\{E_1,E_2,\ldots,E_n\}$ a local orthonormal frame of principal directions of $S$ (see Proposition~\ref{caso1}). The vector field $\nabla H_{k}$ can be written as
\[
\nabla H_{k}=\sum^n_{i=1}\e_i\<\nabla H_{k},E_i\>E_i,\quad\e_i=\<E_i,E_i\>,
\]
and thus we get
\[
P_{k}(\nabla H_{k})=\sum^n_{i=1}\e_i\<\nabla H_{k},E_i\>\Big((-1)^{k}\mu_{_{k}}^i\Big)E_i.
\]
Then equation (\ref{E26A}) is equivalent to
\[
\<\nabla H_{k},E_i\>\bigg(D_kH_{k}+(-1)^{k}\mu_{_{k}}^i\bigg)=0\qquad\textrm{on } \V,
\]
for every $i\in\{1,\ldots,n\}$. Therefore, for each $i$ such that $\<\nabla H_{k},E_i\>\neq0$ on $\V$, we have
\begin{equation}\label{E27}
(-1)^{k}\mu_{_{k}}^i=-D_k\,H_{k}.
\end{equation}
We claim that $\<\nabla H_{k},E_i\>=0$ for some $i$. Otherwise, (\ref{E27}) holds for every $i$, which implies
\[
\tr(P_{k})=\sum_{i=1}^n\e_i\< P_{k}E_i,E_i\>=\sum_{i=i}^n(-1)^{k}\mu_{_{k}}^i=-nD_kH_{k}.
\]
But then, from Lemma~\ref{Pro}, we obtain that $H_{k}=0$ on $\V$, which is not possible.

Now re-arranging the local orthonormal frame if necessary (or even taking another orthonormal frame of principal directions), we may assume that there exists some $m\in\{1,\ldots,n-1\}$ such that
\begin{align}\label{I2}
\<\nabla H_{k},E_i\>\neq0& \textrm{ for } i=1,\ldots,m,\text{ and }\kappa_1<\cdots<\kappa_m.\\
\<\nabla H_{k},E_i\>=0& \textrm{ for } i=m+1,\ldots,n.\nonumber
\end{align}
\begin{claim}\label{claim1}
For every subset $J\subseteq\{1,\ldots,m\}$ we have
\begin{align}\label{E29}
\mu_{_{k}}^J=(-1)^{k+1}D_kH_{k}.
\end{align}
\end{claim}
We will prove (\ref{E29}) by induction on the cardinality $\card(J)$ of the set $J$. For $\card(J)=1$,  equation~(\ref{E29}) is nothing but (\ref{E27}). Let us assume that (\ref{E29}) holds for every set $J$ with $\card(J)=1,2,\ldots,p<m$, and take a set $J_0=\big\{j_{1},\ldots,j_{p+1}\big\}\subseteq\{1,\dots,m\}$ with cardinality $p+1\leq m$. Let $J_1$ and $J_2$ be the two sets of cardinality $p$ such that
\begin{align*}
J_0=\underbrace{\big\{j_{1},j_3,\ldots,j_{p+1}\big\}}_{J_2}\cup\big\{j_{2}\big\}= \underbrace{\big\{j_{2},j_3,\ldots,j_{p+1}\big\}}_{J_1}\cup\big\{j_{1}\big\}.
\end{align*}
By using the induction hypothesis applied to $J_1$ and $J_2$, we have
$\mu_{_{k}}^{J_1}=\mu_{_{k}}^{J_2}=	(-1)^{k+1}D_kH_{k}$.
Now, bearing in mind~(\ref{F}), from the first equality of last equation we obtain
\[
\kappa_{j_2}\mu_{_{k-1}}^{J_0}+\mu_{_{k}}^{J_0}=\kappa_{j_1}\mu_{_{k-1}}^{J_0}+\mu_{_{k}}^{J_0},
\]
and by using~(\ref{I2}) we get $\mu_{_{k-1}}^{J_0}=0$, and so $\mu_{_{k}}^{J_1}=\mu_{_{k}}^{J_2}=\mu_{_{k}}^{J_0}$. That concludes the proof of the Claim \ref{claim1}.

Finally, from (\ref{E18}) and (\ref{I2}) we have $AE_i=\eta_iE_i$, $i=m+1,\ldots,n$, where $\eta_i=-c_kH_{k+1}\kappa_i-cc_kH_k$ is a constant eigenvalue of the constant matrix $\vA$. On the other hand, from (\ref{E29}) for the set $J=\{1,\ldots,m\}$, we have
\[
(-1)^{k+1}D_kH_{k}=\sum_{m<i_1<\cdots<i_{k}}\!\!\!\!\!\!\!\kappa_{i_1}\cdots\kappa_{i_{k}}=\frac{\displaystyle\sum_{m<i_1<\cdots<i_{k}}\!\!\!\!\!\!\!\!\big(\eta_{i_1}+cc_kH_k\big)\cdots \big(\eta_{i_{k}}+cc_kH_k\big)}{\big(-c_kH_{k+1}\big)^{k}}.
\]
In other words,
\[
(-1)^{k+1}D_kH_{k}=B_0+B_1H_k+\cdots+B_kH_k^k,
\]
for certain constants $B_i$. Therefore, $H_{k}$ is locally constant on $\V$, which is a contradiction. This finishes the proof in the Case 1.

\paragraph{Case 2: \emph{S} is of type II.}
Let $\{E_1,E_2,\ldots,E_n\}$ be an orthonormal frame giving the canonical form of $S$ (see Proposition~\ref{caso2}). The gradient $\nabla H_{k}$ can be written in this basis as follows
\[
\nabla H_{k}=-\< \nabla H_{k},E_1\> E_1+\< \nabla H_{k},E_2\> E_2+\sum^n_{i=3}\< \nabla H_{k},E_i\> E_i,
\]
and then we get
\begin{align*}
P_{k}(\nabla H_{k})&=-(-1)^{k}\<\nabla H_{k},E_1\>\Big(\mu_{_{k}}^1E_1+b\mu_{_{k-1}}^{1,2}E_2\Big)\\[5pt]
&\quad+(-1)^{k}\<\nabla H_{k},E_2\>\Big(\!-b\mu_{_{k-1}}^{1,2}E_1+\mu_{_{k}}^1E_2\Big)\\
&\quad+(-1)^{k}\sum^n_{i=3}\<\nabla H_{k},E_i\>\Big(\mu_{_{k}}^i+b^2\mu_{_{k-2}}^{1,2,i}\Big)E_i.
\end{align*}
Now, bearing in mind~(\ref{E26A}), we obtain the following equations on $\V$:
\begin{align*}
&\<\nabla H_{k},E_1\>\Big(D_kH_{k}+(-1)^{k}\mu_{_{k}}^1\Big)+\<\nabla H_{k},E_2\>(-1)^{k}b\mu_{_{k-1}}^{1,2}=0,\\[5pt]
&\<\nabla H_{k},E_2\>\Big(D_kH_{k}+(-1)^{k}\mu_{_{k}}^1\Big)-\<\nabla H_{k},E_1\>(-1)^{k}b\mu_{_{k-1}}^{1,2}=0,\\
&\<\nabla H_{k},E_i\>\Big(D_kH_{k}+(-1)^{k}(\mu_{_{k}}^i+b^2\mu_{_{k-2}}^{1,2,i})\Big)=0,\qquad\textrm{for } i=3,\ldots,n.
\end{align*}
Therefore, if $\<\nabla H_{k},E_1\>\neq0$ or $\<\nabla H_{k},E_2\>\neq 0$ then we have
\begin{equation}\label{2E27A}
(-1)^{k}\mu_{_{k}}^1=-D_kH_{k}\quad\textrm{and}\quad\mu_{_{k-1}}^{1,2}=0.	
\end{equation}
Moreover, for every $i=3,\dots,n$, such that $\<\nabla H_{k},E_i\>\neq0$, we have
\begin{equation}\label{2E27}
(-1)^{k}(\mu_{_{k}}^i+b^2\mu_{_{k-2}}^{1,2,i})=-D_kH_{k}.
\end{equation}
We claim that $\<\nabla H_{k},E_i\>=0$ for some $i$. Otherwise, (\ref{2E27A}) holds and (\ref{2E27}) is true for every $i\geq 3$. Thus, we deduce
\begin{align*}
\tr(P_{k})&=-\<P_{k}E_1,E_1\>+\<P_{k}E_2,E_2\>+\sum^n_{i=3}\<P_{k}E_i,E_i\>\\
&=(-1)^{k}\mu_{_{k}}^1+(-1)^{k}\mu_{_{k}}^1+\sum_{i=3}^n(-1)^{k}\big(\mu_{_{k}}^i+b^2\mu_{_{k-2}}^{1,2,i}\big)=-nD_kH_{k}.
\end{align*}
But this means, from Lemma~\ref{L1}\emph{(b)}, that $H_{k}=0$ on $\V$, which is a contradiction.

Observe that when $\<\nabla H_{k},E_i\>\neq0$ for some $i\geq 3$, then (after re-arranging the local orthonormal frame if necessary) we may assume that there exists some $m\in\{3,\ldots,n\}$ such that
\begin{align}\label{2I2}
\<\nabla H_{k},E_i\>\neq0& \textrm{ for } i=3,\ldots,m,\text{ and }\kappa_3<\cdots<\kappa_m.\\
\<\nabla H_{k},E_i\>=0& \textrm{ for } i=m+1,\ldots,n.\nonumber
\end{align}

\begin{claim}\label{claim2}
If $\<\nabla H_{k},E_1\>=\<\nabla H_{k},E_2\>=0$, then for every non-empty subset $J\subseteq\{3,\ldots,m\}$ we have
\begin{align}\label{2E29}
\mu_{_{k}}^J+b^2\mu_{_{k-2}}^{1,2,J}=(-1)^{k+1}D_kH_{k},
\end{align}
where $m\in\{3,\ldots,n\}$ is the number such that (\ref{2I2}) holds.
\end{claim}
We will show~(\ref{2E29}) by induction on the cardinality of set $J$, $\card(J)$. If $\card(J)=1$,  then~(\ref{2E29}) is nothing but~(\ref{2E27}). Let us assume that~(\ref{2E29}) holds for subsets $J$ with $\card(J)=1,2,\ldots,p<m-2$, and take a set $J_0=\big\{j_{1},\ldots,j_{p+1}\big\}\subseteq\big\{3,\ldots,m\big\}$ with cardinality $p+1\leq m-2$. Let $J_1$ and $J_2$ be the sets of cardinality $p$ such that
\begin{align*}
J_0&=\underbrace{\big\{j_{1},j_3,\ldots,j_{p+1}\big\}}_{J_2}\cup\big\{j_{2}\big\}
=\underbrace{\big\{j_{2},j_3,\ldots,j_{p+1}\big\}}_{J_1}\cup\big\{j_{1}\big\}.
\end{align*}
By the induction hypothesis applied to $J_1$ and $J_2$ we have
\[
\mu_{_{k}}^{J_1}+b^2\mu_{_{k-2}}^{1,2,J_1}=
\mu_{_{k}}^{J_2}+b^2\mu_{_{k-2}}^{1,2,J_2}=(-1)^{k+1}D_kH_{k},
\]
and then, by using~(\ref{F}) in the first equality, we get
\[
(\kappa_{j_2}-\kappa_{j_1})\big(\mu_{_{k-1}}^{J_0}+b^2\mu_{_{k-3}}^{1,2,J_0}\big)=0.
\]
But $\kappa_{j_2}\neq\kappa_{j_1}$ (see~(\ref{2I2})), and so $\mu_{_{k-1}}^{J_0}+b^2\mu_{_{k-3}}^{1,2,J_0}=0$. This yields
\[
\mu_{_{k}}^{J_1}+b^2\mu_{_{k-2}}^{1,2,J_1}=
\mu_{_{k}}^{J_2}+b^2\mu_{_{k-2}}^{1,2,J_2}=
\mu_{_{k}}^{J_0}+b^2\mu_{_{k-2}}^{1,2,J_0},
\]
and the proof of the Claim~\ref{claim2} finishes.

\begin{claim}\label{claim3}
If $\<\nabla H_{k},E_1\>\neq0$ or $\<\nabla H_{k},E_2\>\neq0$, then for every $J\subseteq\{3,\ldots,m\}$ (admitting $J=\emptyset$) we have
\begin{align}
\text{a) } \mu_{_{k-1}}^{1,2,J}&=0,\label{2E29A2}\\
\text{b) } \mu_{_{k}}^{1,2,J}&=(-1)^{k+1}D_kH_{k},\label{2E29A}
\end{align}
where $m\in\{3,\dots,m\}$ is the number such that (\ref{2I2}) holds.
\end{claim}
[We note here that if there is no number $m\geq 3$ such that (\ref{2I2}) holds, then this claim only refers to $J=\emptyset$.]
First, we prove (a) by induction on $\card(J)$. If $\card(J)=0$, then (\ref{2E29A2}) is nothing but the second equation of (\ref{2E27A}). Let us assume that (\ref{2E29A2}) holds for subsets $J$ with $\card(J)=0,1,\ldots,p<m-2$, and take a set $J_0=\big\{j_{1},\ldots,j_{p+1}\big\}\subseteq\{3,\ldots,m\}$ with cardinality $p+1\leq m-2$. Let $J_1$ and $J_2$ be the sets of cardinality $p$ such that
\begin{align*}
J_0=&\underbrace{\big\{j_{1},\ldots,j_{p}\big\}}_{J_{1}}\cup\big\{j_{p+1}\big\}=
\underbrace{\big\{j_{1},\ldots,j_{p-1},j_{p+1}\big\}}_{J_2}\cup\big\{j_{p}\big\}.
\end{align*}
By the induction hypothesis applied to $J_1$ and $J_2$, we have
$\mu_{_{k-1}}^{1,2,J_1}=\mu_{_{k-1}}^{1,2,J_2}=0$.
Now, by using (\ref{F}), we get $(\kappa_{j_{p+1}}-\kappa_{j_p})\mu_{_{k-2}}^{1,2,J_0}=0$, and from (\ref{2I2}) we obtain $\mu_{_{k-2}}^{1,2,J_0}=0$. That leads to $\mu_{_{k-1}}^{1,2,J_0}=\mu_{_{k-1}}^{1,2,J_1}=\mu_{_{k-1}}^{1,2,J_2}$, and the proof of (a) finishes.

The proof of (b) is similar and is also made by induction on $\card(J)$. If $\card(J)=0$, since $\mu^1_{_{k}}=\mu^{1,2}_{_{k}}+\kappa\mu^{1,2}_{_{k-1}}$, then (\ref{2E29A}) follows by using the first equation of (\ref{2E27A}) and the claim (a). Let us assume that (\ref{2E29A}) holds for subsets $J$ with $\card(J)=0,1,\ldots,p<m-2$ and take a set $J_0=\big\{j_{1},\ldots,j_{p+1}\big\}\subseteq\{3,\ldots,m\}$ with cardinality $p+1\leq m-2$. Let $J_1$ be the set of cardinality $p$ such that
\[
J_0=\underbrace{\big\{j_{1},\ldots,j_{p}\big\}}_{J_{1}}\cup\big\{j_{p+1}\big\}.
\]
By the induction hypothesis applied to $J_1$, and bearing in mind (\ref{2E29A2}), we have $\mu_{_{k}}^{1,2,J_1}=\kappa_{p+1}\mu_{_{k-1}}^{1,2,J_0}+\mu_{_{k}}^{1,2,J_0}=\mu_{_{k}}^{1,2,J_0}$, and this finishes the proof of Claim~\ref{claim3}.

Observe that if $J\neq\emptyset$, then equations (\ref{2E29}) and (\ref{2E29A}) lead to
\[
\mu_{_{k}}^{1,2,J}=\mu_{_{k}}^{J}+b^2\mu_{_{k-2}}^{1,2,J}=(-1)^{k+1}D_kH_{k}.	
\]
Now, we can use (\ref{F}) to get $\mu_{_{k}}^{J}=\mu_{_{k}}^{1,2,J}+2\kappa\mu_{_{k-1}}^{1,2,J}+ \kappa^2\mu_{_{k-2}}^{1,2,J}$. Putting this into last equation we obtain
$2\kappa\mu_{_{k-1}}^{1,2,J}+(\kappa^2+b^2)\mu_{_{k-2}}^{1,2,J}=0$,
and as a consequence of Claim \ref{claim3}(a) we deduce
$\mu_{_{k-2}}^{1,2,J}=0$, for every non-empty set $J\subseteq\{3,\dots,m\}$.

Finally, from (\ref{E18}) and (\ref{2I2}), we have $AE_i=\eta_iE_i$, for $i=m+1,\ldots,n$, where $\eta_i=-c_kH_{k+1}\kappa_i-cc_kH_k$ is a constant eigenvalue of the constant matrix $A$. As a consequence we deduce
\begin{align}\label{p2}
\mu^{1,\ldots,m}_{_{r}}=\sum_{m<i_1<\cdots<i_{r}}\!\!\!\!\!\!\!\kappa_{i_1}\cdots\kappa_{i_{r}}&=\frac{\displaystyle\sum_{m<i_1<\cdots<i_{r}}\!\!\!\!\!\!\!\!\big(\eta_{i_1}+cc_kH_k\big)\cdots \big(\eta_{i_{r}}+cc_kH_k\big)}{\big(-c_kH_{k+1}\big)^{r}}\nonumber\\[5pt]
&=B_0+B_1H_k+\cdots+B_rH_k^r,
\end{align}
for certain constants $B_i$. To finish the proof in Case 2, we distinguish two subcases.

\textbf{(2.1)} If $\<\nabla H_{k},E_1\>=\<\nabla H_{k},E_2\>=0$, let $V$ be the plane spanned by $\{E_1,E_2\}$. Since we have
\begin{align*}
AE_1 &= -c_k(H_{k+1}\kappa+cH_k)E_1-c_kH_{k+1}bE_2,\\
AE_2 &= c_kH_{k+1}bE_1-c_k(H_{k+1}\kappa+cH_k)E_2,
\end{align*}
then $V$ is an invariant subspace, and thus the operator $A|_V$ has constant invariants
$\theta=\tr(A|_V)$ and $\beta=\det(A|_V)$, which are given by
\begin{align*}
\theta&=-2c_k(H_{k+1}\kappa+cH_k),\\
\beta&=c_k^2(H_{k+1}\kappa+cH_k)^2+c_k^2H_{k+1}^2b^2.
\end{align*}
Thus, we can find constants $\theta_i$, $B_i$ such that
\begin{align}\label{V1}
2\kappa &= \theta_0+\theta_1H_k,\nonumber\\
\kappa^2+b^2 &= B_0+B_1H_k+B_2H_k^2.
\end{align}
On the other hand, by using (\ref{2E29}) for $J=\{3,\dots,m\}$, we get
\begin{align*}
(-1)^{k+1}D_kH_{k}&=\mu_{_{k}}^{3,\ldots,m}+b^2\mu_{_{k-2}}^{1,\ldots,m}\\
&=\mu_{_{k}}^{1,\ldots,m}+2\kappa\mu_{_{k-1}}^{1,\ldots,m}+(\kappa^2+b^2)\mu_{_{k-2}}^{1,\ldots,m},	
\end{align*}
that, jointly with (\ref{p2}) and (\ref{V1}), yields
\[
(-1)^{k+1}D_kH_k=F_0+F_1H_k+\cdots+F_kH_k^k,
\]
for certain constants $F_i$. Thus, $H_{k}$ is locally constant on $\V$, which is a contradiction.

\textbf{(2.2)} If $\<\nabla H_{k},E_1\>\neq0$ or $\<\nabla H_{k},E_2\>\neq0$, then
by using (\ref{2E29A}) for $J=\{3,\dots,m\}$ (or $J=\emptyset$ if there is no number $m\geq 3$ such that (\ref{2I2}) holds) we obtain that
\[
(-1)^{k+1}D_kH_{k}=\mu_{_{k}}^{1,2,\ldots,m},
\]
that jointly with (\ref{p2}) implies again that $H_{k}$ is locally constant on $\V$, which is a contradiction. That concludes the proof in the Case 2.

\paragraph{Case 3: \emph{S} is of type III.}
Let $\{E_1,E_2,\ldots,E_n\}$ be a pseudo-orthonormal frame giving the canonical form of $S$ (see Proposition~\ref{caso3}). The gradient $\nabla H_{k}$ can be written, in this basis, as follows
\[
\nabla H_{k}=-\< \nabla H_{k},E_2\> E_1-\< \nabla H_{k},E_1\> E_2+\sum^n_{i=3}\< \nabla H_{k},E_i\> E_i,
\]
and then we obtain
\begin{align*}
P_{k}(\nabla H_{k})&=-(-1)^{k}\< \nabla H_{k},E_2\>\Big(\mu_{_{k}}^1E_1-\mu_{_{k-1}}^{1,2}E_2\Big)-(-1)^{k}\<\nabla H_{k},E_1\>\mu_{_{k}}^1E_2\\
&\quad+(-1)^{k}\sum^n_{i=3}\< \nabla H_{k},E_i\>\mu_{_{k}}^iE_i.
\end{align*}
Thus, equation (\ref{E26A}) yields the following system of equations on $\V$:
\begin{align*}
&\<\nabla H_{k},E_1\>\Big(D_kH_{k}+(-1)^{k}\mu_{_{k}}^1\Big)-\<\nabla H_{k},E_2\> (-1)^{k}\mu_{_{k-1}}^{1,2}=0,\\
&\<\nabla H_{k},E_2\>\Big(D_kH_{k}+(-1)^{k}\mu_{_{k}}^1\Big)=0,\\
&\<\nabla H_{k},E_i\>\Big(D_kH_{k}+(-1)^{k}\mu_{_{k}}^i\Big)=0,\qquad\textrm{for } i\geq3.
\end{align*}
Therefore, if $\<\nabla H_{k},E_2\>\neq 0$, then we get
\begin{equation}\label{3E27A}
(-1)^{k}\mu_{_{k}}^1=-D_kH_{k}\qquad\textrm{and}\qquad \mu_{_{k-1}}^{1,2}=0,	
\end{equation}
and, for every $i=3,\ldots,n$ such that $\<\nabla H_{k},E_i\>\neq0$, we have
\begin{equation}\label{3E27}
(-1)^{k}\mu_{_{k}}^i=-D_kH_{k}.
\end{equation}
We claim that $\<\nabla H_{k},E_i\>=0$ for some $i$. Otherwise, equations (\ref{3E27A}) and (\ref{3E27}) hold, and then we get
\begin{align*}
{\rm{tr}}(P_{k})&=-\< P_{k}E_1,E_2\>-\< P_{k}E_2,E_1\>+\sum^n_{i=3}\< P_{k}E_i,E_i\>\\
&=(-1)^{k}\mu_{_{k}}^1+(-1)^{k}\mu_{_{k}}^1+\sum_{i=3}^n(-1)^{k}\mu_{_{k}}^i=-nD_kH_{k}.	
\end{align*}
But this means, from Lemma~\ref{L1}(b), that $H_{k}=0$ on $\V$, which is a contradiction.

Observe that when $\<\nabla H_{k},E_i\>\neq0$, for some $i\geq 3$, then (after re-arranging the local orthonormal frame if necessary) we may assume that there exists some $m\in\{3,\ldots,n\}$ such that
\begin{align}\label{3I2}
\<\nabla H_{k},E_i\>\neq0& \textrm{ for } i=3,\ldots,m,\text{ and }\kappa_3<\cdots<\kappa_m.\\
\<\nabla H_{k},E_i\>=0& \textrm{ for } i=m+1,\ldots,n.\nonumber
\end{align}

\begin{claim}\label{claim4}
If $\<\nabla H_{k},E_1\>=\<\nabla H_{k},E_2\>=0$, then for every non-empty set $J\subseteq\{3,\ldots,m\}$ we have
\[
\mu_{_{k}}^J=(-1)^{k+1}D_kH_{k}.
\]
\end{claim}

\begin{claim}\label{claim5}
If $\<\nabla H_{k},E_1\>\neq0$ and $\< \nabla H_{k},E_2\>=0$, then for every non-empty set $J\subseteq\{1,3,\ldots,m\}$ we have
\[
\mu_{_{k}}^J=(-1)^{k+1}D_kH_{k}.
\]
\end{claim}

\begin{claim}\label{claim6}
If $\<\nabla H_{k},E_2\>\neq0$, then for every set $J\subseteq\{3,\ldots,m\}$ (admitting $J=\emptyset$) we have
\[
\mu_{_{k}}^{1,2,J}=(-1)^{k+1}D_kH_{k}.
\]
\end{claim}

In Claims \ref{claim5} and \ref{claim6}, $m\in\big\{3,\ldots,n\big\}$ is the number such that (\ref{3I2}) holds. If such number $m$ does not exist, then Claims \ref{claim5} and \ref{claim6} are only valid for $J=\{1\}$ and $J=\emptyset$, respectively. These Claims can be proved similarly to Claims \ref{claim1} and \ref{claim3}(b).

Finally, from (\ref{E18}) and (\ref{3I2}), we have that $AE_i=\eta_iE_i$, for $i=m+1,\ldots,n$,
where $\eta_i=-c_kH_{k+1}\kappa_i-cc_kH_k$ is a constant eigenvalue of the constant matrix $\vA$. Then we obtain that
\begin{equation}\label{p3}
\mu^{1,\ldots,m}_{_{r}}=B_0+B_1H_k+\cdots+B_rH_k^r,\qquad r>0,
\end{equation}
for certain constants $B_i$ (see (\ref{p2})). To finish the proof in this case, we distinguish three subcases:

\textbf{(3.1)} $\<\nabla H_{k},E_1\>=\<\nabla H_{k},E_2\>=0$,\\
\indent\textbf{(3.2)} $\<\nabla H_{k},E_1\>\neq0$ and $\<\nabla H_{k},E_2\>=0$,\\
\indent\textbf{(3.3)} $\<\nabla H_{k},E_2\>\neq0$.

Since the three subcases are similar, we will prove one of them, for example the case (3.1). From $\<\nabla H_{k},E_2\>=0$ and equation (\ref{E18}) we get $AE_2=\eta E_2$, where $\eta=-c_kH_{k+1}\kappa-cc_kH_k$ is a constant eigenvalue of $A$. Thus, we can find two constants $\beta_0,\beta_1$ such that
\begin{equation}\label{p4}
\kappa=\beta_0+\beta_1H_k.
\end{equation}
On the other hand,  from Claim~\ref{claim4} for the set $J=\{3,\ldots,m\}$, we obtain that
\[
(-1)^{k+1}D_kH_{k}=\mu^{3,\ldots,m}_{_{k}}=\mu^{1,\ldots,m}_{_{k}}+2\kappa\mu^{1,\ldots,m}_{_{k-1}}+\kappa^2\mu^{1,\ldots,m}_{_{k-2}},
\]
that, jointly with (\ref{p3}) and (\ref{p4}), leads to
\[
(-1)^{k+1}D_kH_{k}=G_0+G_1H_k+\cdots+G_kH_k^k,
\]
for certain constants $G_i$. Thus, $H_{k}$ is locally constant on $\V$, which is a contradiction.

Subcases (3.2) and (3.3) can be proved in a similar way by using now Claims \ref{claim5} and \ref{claim6}, respectively.

\paragraph{Case 4: \emph{S} is of type IV.}
Let $\{E_1,E_2,\ldots,E_n\}$ be a pseudo-orthonormal frame giving the canonical form of $S$ (see Proposition~\ref{caso4}). We proceed as in Case 3 to show that equation (\ref{E26A}) is equivalent to the following equations on $\V$:
\begin{align*}
&\<\nabla H_{k},E_1\>\Big(D_kH_{k}+(-1)^{k}\mu_{_{k}}^1\Big)+\<\nabla H_{k},E_3\>(-1)^{k}\mu_{_{k-1}}^{1,2}-\<\nabla H_{k},E_2\>(-1)^{k}\mu_{_{k-2}}^{1,2,3}=0,\\
&\<\nabla H_{k},E_2\>\Big(D_kH_{k}+(-1)^{k}\mu_{_{k}}^1\Big)=0,\\
&\<\nabla H_{k},E_3\>\Big(D_kH_{k}+(-1)^{k}\mu_{_{k}}^1\Big)-\<\nabla H_{k},E_2\>(-1)^{k}\mu_{_{k-1}}^{1,2}=0,\\
&\<\nabla H_{k},E_i\>\Big(D_kH_{k}+(-1)^{k}\mu_{_{k}}^i\Big)=0,\qquad\textrm{for } i\geq4.
\end{align*}
Thus, if $\<\nabla H_{k},E_2\>\neq 0$, we get
\begin{equation}\label{4E27A}
(-1)^{k}\mu_{_{k}}^1=-D_kH_{k},\qquad\mu_{_{k-1}}^{1,2}=0\qquad\textrm{and}\qquad \mu_{_{k-2}}^{1,2,3}=0.	
\end{equation}
However, if $\<\nabla H_{k},E_2\>=0$ and $\<\nabla H_{k},E_3\>\neq0$, then we have
\begin{equation}\label{4E27B}
(-1)^{k}\mu_{_{k+1}}^1=-D_kH_{k}\qquad\textrm{and}\qquad\mu_{_{k-1}}^{1,2}=0.	
\end{equation}
Moreover, for every $i=4,\ldots,n$ such that $\<\nabla H_{k},E_i\>\neq0$, we get
\begin{equation}\label{4E27}
(-1)^{k}\mu_{_{k}}^i=-D_kH_{k}.
\end{equation}
We claim that $\<\nabla H_{k},E_i\>=0$ for some $i$. Otherwise, (\ref{4E27A}) and (\ref{4E27}) hold and then we deduce
\begin{align*}
{\rm{tr}}(P_{k})&=-\< P_{k}E_1,E_2\>-\< P_{k}E_2,E_1\>+\< P_{k}E_3,E_3\>+\sum^n_{i=4}\< P_{k}E_i,E_i\>\\
&=(-1)^{k}\mu_{_{k}}^1+(-1)^{k}\mu_{_{k}}^1+(-1)^{k}\mu_{_{k}}^1+\sum_{i=4}^n(-1)^{k}\mu_{_{k}}^i=-nD_kH_{k},
\end{align*}
but this means, from Lemma \ref{L1}, that $H_{k}=0$ on $\V$, which is a contradiction.

If there is some $i\geq 4$ such that $\<\nabla H_{k},E_i\>\neq0$, then (after re-arranging the local pseudo-orthonormal frame if necessary) we may assume that there exists some number $m\in\{4,\dots,n\}$ such that
\begin{align}\label{4I2}
\<\nabla H_{k},E_i\>\neq0& \textrm{ for } i=4,\ldots,m,\text{ and }\kappa_4<\cdots<\kappa_m.\\
\<\nabla H_{k},E_i\>=0& \textrm{ for } i=m+1,\ldots,n.\nonumber
\end{align}

\begin{claim}\label{claim7}
If $\<\nabla H_{k},E_1\>=\<\nabla H_{k},E_2\>=\<\nabla H_{k},E_3\>=0$, then for every non-empty set $J\subseteq\{4,\ldots,m\}$ we have
\[
\mu_{_{k}}^J=(-1)^{k+1}D_kH_{k}.
\]
\end{claim}

\begin{claim}\label{claim8}
If $\<\nabla H_{k},E_1\>\neq0$ and $\< \nabla H_{k},E_2\>=\< \nabla H_{k},E_3\>=0$, then for every
set $J\subseteq\{4,\ldots,m\}$ we have
\[
\mu_{_{k}}^{1,J}=(-1)^{k+1}D_kH_{k}.
\]
\end{claim}

\begin{claim}\label{claim9}
If $\<\nabla H_{k},E_2\>=0$ and $\<\nabla H_{k},E_3\>\neq0$, then for every set $J\subseteq\{4,\ldots,m\}$ we have
\[
\mu_{_{k}}^{1,2,J}=(-1)^{k+1}D_kH_{k}.
\]
\end{claim}

\begin{claim}\label{claim10}
If $\<\nabla H_{k},E_2\>\neq0$, then for every set $J\subseteq\{4,\ldots,m\}$ we have
\begin{equation}\label{4E29D}
\mu_{_{k}}^{1,2,3,J}=(-1)^{k+1}D_kH_{k}.
\end{equation}
\end{claim}

In Claims \ref{claim8}, \ref{claim9} and \ref{claim10}, $m\geq4$ is the number such that (\ref{4I2}) holds. If such number does not exist, then these claims are valid only for $J=\emptyset$.

Claims \ref{claim7}, \ref{claim8} and \ref{claim9} can be proved analogously to Claims \ref{claim1} and \ref{claim3}. Thus we are going to prove Claim \ref{claim10} by induction on the cardinality of $J$.

From (\ref{4E27A}) we get $\kappa\mu_{_{k-2}}^{1,2,3}+\mu_{_{k-1}}^{1,2,3}=0$, and then
\[
\mu_{_{k}}^1=\mu_{_{k}}^{1,2}+\kappa\mu_{_{k-1}}^{1,2}=\mu_{_{k}}^{1,2}=\mu_{_{k}}^{1,2,3}+\kappa\mu_{_{k-1}}^{1,2,3}
=\mu_{_{k}}^{1,2,3}+\kappa\Big(\!\!\!-\kappa\mu_{_{k-2}}^{1,2,3}\Big)=\mu_{_{k}}^{1,2,3}.	
\]
From here and the first equation of (\ref{4E27A}), we obtain (\ref{4E29D}) for $\card(J)=0$ (i.e. $J=\emptyset$). Let us assume now that (\ref{4E29D})  hold for every set $J$ with $\card(J)=0,1,\ldots,p<m-3$ and take a set $J_0=\big\{j_{1},j_2\ldots,j_{p+1}\big\}\subseteq\big\{4,\ldots,m\big\}$ with cardinality $p+1\leq m-3$. Let $J_1$ and $J_2$ be the two sets of cardinality $p$ such that
\[
J_0=\underbrace{\big\{j_{1},j_3,\ldots,j_{p+1}\big\}}_{J_2}\cup\big\{j_{2}\big\}=\underbrace{\big\{j_{2},j_3,\ldots,j_{p+1}\big\}}_{J_1}\cup\big\{j_{1}\big\}.
\]
By applying the induction hypothesis to $J_1$ and $J_2$, we deduce
\[
\mu_{_{k}}^{1,2,3,J_1}=\mu_{_{k}}^{1,2,3,J_2}=(-1)^{k+1}D_kH_{k}.
\]
By using (\ref{F}) in the first equality we get $(\kappa_{j_{1}}-\kappa_{j_2})\mu_{_{k-1}}^{1,2,3,J_0}=0$, and from (\ref{4I2}) we obtain $\mu_{_{k-1}}^{1,2,3,J_0}=0$. Thus $\mu_{_{k}}^{1,2,3,J_1}=\mu_{_{k}}^{1,2,3,J_2}=\mu_{_{k}}^{1,2,3,J_0}$, and the proof of the Claim finishes.

Finally, from (\ref{E18}) and (\ref{4I2}), we get $AE_i=\eta_iE_i$, with $i=m+1,\ldots,n$, where
$\eta_i=-c_kH_{k+1}\kappa_i-cc_kH_k$ is a constant eigenvalue of the constant matrix $A$. Thus we have
\begin{equation}\label{p6}
\mu^{1,\ldots,m}_{_{r}}=B_0+B_1H_k+\cdots+B_rH_k^r,
\end{equation}
for certain constants $B_i$ (see (\ref{p2})). To finish the proof in this Case, we distinguish four subcases:

\textbf{(4.1)} $\<\nabla H_{k},E_1\>=\<\nabla H_{k},E_2\>=\<\nabla H_{k},E_3\>=0$,\\
\indent\textbf{(4.2)} $\<\nabla H_{k},E_1\>\neq0$ and $\<\nabla H_{k},E_2\>=\<\nabla H_{k},E_3\>=0$,\\
\indent\textbf{(4.3)} $\<\nabla H_{k},E_2\>=0$ and $\<\nabla H_{k},E_3\>\neq0$,\\
\indent\textbf{(4.4)} $\<\nabla H_{k},E_2\>\neq0$.

Since the four subcases are similar, we are going to prove one of them, the case (4.1). From $\<\nabla H_{k},E_2\>=0$ and (\ref{E18}) we get $AE_2=\eta E_2$, where $\eta=-c_kH_{k+1}\kappa-cc_kH_k$ is a constant eigenvalue of $A$. Then we have
\begin{equation}\label{p7}
\kappa=\beta_0+\beta_1H_k,
\end{equation}
for certain constants $\beta_0$ and $\beta_1$. On the other hand, from Claim \ref{claim7} for the set $J=\{4,\ldots,m\}$, we obtain
\[
(-1)^{k+1}D_kH_{k}=\mu^{4,\ldots,m}_{_{k}}
=\mu^{1,\ldots,m}_{_{k}}+3\kappa\mu^{1,\ldots,m}_{_{k-1}}+3\kappa^2\mu^{1,\ldots,m}_{_{k-2}}+\kappa^3\mu^{1,\ldots,m}_{_{k-3}},
\]
that, jointly with (\ref{p6}) and (\ref{p7}), leads to
\[
(-1)^{k+1}D_kH_{k}=G_0+G_1H_k+\cdots+G_kH_k^k,
\]
for certain constants $G_i$. But this means that $H_{k}$ is locally constant on $\V$, which is a contradiction.

Subcases (4.2), (4.3) and (4.4) can be proved similarly by using now Claims \ref{claim8}, \ref{claim9} and \ref{claim10}, respectively.

In conclusion, from Cases 1--4 we deduce that the $k$-th mean curvature $H_k$ is constant, and so the proof of Lemma \ref{L7} finishes.
\end{proof}

\section{Proof of Theorem~\ref{T1}}
\label{s:proof.T1}

We have already checked in Section \ref{s:examples} that each one of the hypersurfaces mentioned in Theorem \ref{T1} does satisfy the condition $L_k\psi=A\psi$, for a self-adjoint constant matrix $A\in\mathbb{R}^{(n+2)\times(n+2)}$. Conversely, let us assume that $\psi:M\rightarrow\mathbb{M}_c^{n+1}\subset\mathbb{R}_q^{n+2}$ satisfies the condition $L_k\psi=A\psi$, for some self-adjoint constant matrix $A\in\mathbb{R}^{(n+2)\times(n+2)}$. Since $b=0$, from (\ref{24}) we get that $H_k$ is constant on $M$, and from Lemma \ref{L7} we know that $H_{k+1}$ is also constant on $M$.

Let us assume that $H_{k+1}$ is a non-zero constant (otherwise, there is nothing to prove).
From (\ref{E13}), (\ref{E19A}) and (\ref{E18}), we have
\begin{align}
A\psi&=c_kH_{k+1}N-cc_kH_k\psi,\label{E31+}\\
\vA X&=-c_kH_{k+1}SX-cc_kH_kX,\label{E32}\\
AN&=\alpha N +\e cc_kH_{k+1}\psi,\label{E31}
\end{align}
with $\alpha=-\varepsilon C_k(nH_1H_{k+1}-(n-k-1)H_{k+2})$. Taking covariant derivative in (\ref{E31}), and using (\ref{E32}), we have
\[
\nabla^0_X(AN)=\<\nabla\alpha,X\>N-\alpha SX + \e cc_kH_{k+1}X,
\]
but also from (\ref{E32}) we obtain
\[
\nabla^0_X(AN)=A(\nabla^0_XN)=-A(SX)=c_kH_{k+1}S^2X+cc_kH_kSX.
\]
From the last two equations we deduce that $\alpha$ is constant on $M$, and also that the shape operator $S$ satisfies the equation
\begin{equation}\label{S.th1}
S^2+\lambda S-\e cI=0,\qquad \lambda=\frac{\alpha}{c_kH_{k+1}}+\frac{cH_k}{H_{k+1}}=\textrm{constant.}
\end{equation}
As a consequence, $M$ is an isoparametric hypersurface in $\mathbb{M}_c^{n+1}$ and the minimal polynomial of its shape operator $S$ is of degree at most two. We claim that $M$ is not totally umbilical. Otherwise, from Example \ref{ej2} we get that it should be totally geodesic, but this is a contradiction, since we are supposing that $H_{k+1}$ is a non-zero constant. Thus, the minimal polynomial of $S$ is exactly of degree two. If $S$ is diagonalizable, then $M$ has exactly two distinct constant principal curvatures, and then it is an open piece of a standard pseudo-Riemannian product (Example \ref{ej3}), \cite{Nom81,X99,ZX}.

Suppose now that $S$ is not diagonalizable, so that the minimal polynomial of $S$ is given by $\mu_S(t)=t^2+\lambda t-\e c$, with discriminant $d_S=\lambda^2+4\e c\leq 0$. From equations (\ref{E31+})--(\ref{S.th1}) we easily deduce that the minimal polynomial of $A$ is given by $\mu_A(t)=t^2+a_1t+a_0$,
where $a_1=2cc_kH_k-\lambda c_kH_{k+1}$ and $a_0=c_k^2H_k^2-\lambda cc_k^2H_kH_{k+1}-\e c c_k^2H_{k+1}^2$ are constants. Since the discriminant $d_A$ of $\mu_A(t)$ is given by $d_A=c_k^2H_{k+1}^2d_S$,  then $A$ also is not diagonalizable. Since $\<A\psi,\psi\>=-c_k$ is constant and $\mu_A(-cc_k)\neq0$, then $M$ is an open piece of a quadratic hypersurface as in Example \ref{ej4}. That concludes the proof of Theorem \ref{T1}.

\section{Proof of Theorem~\ref{T2}}
\label{s:proof.T2}

We have already checked in Section \ref{s:examples} that each one of the hypersurfaces mentioned in Theorem \ref{T2} does satisfy the condition $L_k\psi=A\psi+b$, for a self-adjoint constant matrix $A\in\mathbb{R}^{(n+2)\times(n+2)}$ and some non-zero constant vector $b$. Conversely, let us assume that $\psi:M\rightarrow\mathbb{M}_c^{n+1}\subset\mathbb{R}_q^{n+2}$ satisfies the condition $L_k\psi=A\psi+b$, for some self-adjoint constant matrix $A\in\mathbb{R}^{(n+2)\times(n+2)}$ and some non-zero constant vector $b$. Since $H_k$ is assumed to be constant on $M$, from Lemma \ref{L7} we know that $H_{k+1}$ is also constant on $M$. The case $H_{k+1}=0$ cannot occur, because in that case we have $b=0$ (see Example \ref{ej1}).

Let us assume that $H_{k+1}$ is a non-zero constant. From (\ref{24}) we obtain that $b^\top=0$ and that the function $\<b,\psi\>$ is constant on $M$. Now we use (\ref{23}) to deduce that
\[
\<b,N\>=\frac{cH_k}{H_{k+1}}\<b,\psi\>=\text{constant}.
\]
Since $b=\e\<b,N\>N+c\<b,\psi\>\psi$, taking covariant derivative in this equation we have
\[
-\e\<b,N\>SX+c\<b,\psi\>X=0,
\]
for any tangent vector field $X$. If $\<b,N\>\neq 0$, then $M$ is totally umbilical (but not totally geodesic). Otherwise, $b=c\<b,\psi\>\psi$ and then $\<b,\psi\>=0$, but this implies $b=0$. That concludes the proof of Theorem \ref{T2}.

\end{document}